\newtheorem{theo}{Theorem}
\newtheorem{lem}{Lemma}[section]
\newtheorem{defi}{Definition}[section]
\newtheorem{ass}{Assumption}[section]
\newtheorem{rmk}{Remark}[section]
\newcommand{\eps}{\varepsilon}
\newcommand{\R}{\mathbb{R}}
\newcommand{\N}{\mathbb{N}}
\renewcommand{\o}{\overline}
\numberwithin{equation}{section}
\def\debaixodaseta#1#2{\mathrel{}\mathop{\longrightarrow}\limits^{#1}_{#2}}
\def\debaixodasetafraca#1#2{\mathrel{}\mathop{\rightharpoonup}\limits^{#1}_{#2}}
\title[Competitive contaminant transport]{A degenerate elliptic-parabolic system arising in competitive contaminant transport}
\author[Ba\'\i a]{Margarida Ba\'\i a}
\address{CAMGSD/Departamento de Matem\'atica, Instituto Superior T\'ecnico, Universidade  de Lisboa, Av. Rovisco Pais 1, 1049-001 Lisboa, Portugal}
\email{mbaia@math.tecnico.ulisboa.pt}
\author[Bozorgnia]{Farid Bozorgnia}
\address{CAMGSD, Instituto Superior T\'ecnico, Universidade  de Lisboa, Av. Rovisco Pais 1, 1049-001 Lisboa, Portugal}
\email{bozorg@math.tecnico.ulisboa.pt}
\author[Monsaingeon]{L\'eonard Monsaingeon}
\address{CAMGSD, Instituto Superior T\'ecnico, Universidade  de Lisboa, Av. Rovisco Pais 1, 1049-001 Lisboa, Portugal}
\email{leonard.monsaingeon@tecnico.ulisboa.pt}
\author[Videman]{Juha  Videman}
\address{CAMGSD/Departamento de Matem\'atica, Instituto Superior T\'ecnico, Universidade   de Lisboa, Av. Rovisco Pais 1, 1049-001 Lisboa, Portugal}
\email{jvideman@math.tecnico.ulisboa.pt}
\begin{document}
\begin{abstract}
In this work we investigate a coupled system of degenerate and nonlinear partial differential equations governing the transport of reactive solutes in groundwater.  We show that the system admits a unique weak solution provided the nonlinear adsorption isotherm associated with the reaction process  satisfies certain physically reasonable structural conditions. We conclude, moreover, that the solute concentrations stay non-negative if the source 
term is componentwise non-negative and  investigate numerically the finite speed of propagation of compactly supported initial concentrations, 
in a two-component test case.
\end{abstract}
\maketitle

\section{Introduction}


Contaminant transport in groundwater and subsurface environments is a complex process given the combination of phenomena, phases and interfaces present in such environments. The transport can be influenced by an array of mechanical and chemical interactions, between or among the different constituent phases and species, such as advection, diffusion, dispersion and sorption, cf. \cite{B72,BC10}.

In this work, we will study a system of partial differential equations modeling multi-component  contaminant transport (transport of several pollutants). For expediency, we focus on the case where the transport is dominated by adsorption and diffusion. The adsorption process, adherence of a pollutant on the solid matrix at the fluid-solid interface, is assumed to be in equilibrium and of Freundlich type. In Section \ref{Motivation}, we show that these assumptions lead to   the initial-boundary value problem
\begin{equation}
\left\{
\begin{array}{ll}
\partial_t b(u)-\Delta u =f	\qquad&	 \text{in }(0,T)\times\Omega,\\
u=u_b	&	\text{on } (0,T)\times\Gamma,\\
b(u|_{t=0})=b_0& \text{in }\Omega,
\end{array}
\right.
\label{eq-IBVP}
\end{equation}
for the unknown solute concentrations  $u\!:\![0,T]\times \Omega \to \R^m$, where $f\!:\![0,T]\times\Omega \to \R^m$, $b_0\!:\!\Omega \to \R^m$ and
 $u_b\!:\![0,T]\times \Gamma \to \R^m$ are given,
$\Omega\subset \mathbb{R}^d$ is a smooth domain where the transport process takes place,  $\Gamma=\partial\Omega$, $T>0$ and  $m\geq 2$.  Moreover,  the vector field $b\!:\!\mathbb{R}^m \to \mathbb{R}^m$, associated with the Freundlich adsorption isotherm,  can be written as a gradient of a non-negative convex function  and is such that  ${\rm D}_u b(u)\rightarrow \infty$ when $u\rightarrow 0$,  i.e., the elliptic-parabolic system \eqref{eq-IBVP} becomes singular at $u= 0$. The system may thus exhibit finite speed of propagation of compactly supported initial concentrations giving rise to free boundaries that separate the regions where the solute concentrations vanish from those where they are positive.




Problem \eqref{eq-IBVP} falls under the general quasi-linear elliptic-parabolic systems addressed by Alt and Luckhaus  in their seminal paper
\cite{AL83}.  Even if our method of proof is  largely influenced by these results,
  we wish to present here a detailed, and at the same
time more straightforward, existence proof. Our approach, based on Rothe's method (\cite{R30}) and on solving a convex 
minimization problem at each time step,  provides also a simple method for the numerical approximation of solutions to system   \eqref{eq-IBVP}.

If the nonlinear term $b$  behind the time derivative in \eqref{eq-IBVP} is invertible, as a function from $\mathbb{R}^m$ to  $\mathbb{R}^m$, one can make
a  change of variables and pass the degeneracy to the diffusion term. In that case,  system  \eqref{eq-IBVP}  reduces to a coupled system of
nonlinear diffusion equations of porous medium type or, in the one-component case, to a generalized porous medium equation. This inversion is possible 
for most adsorption isotherms, thus in the one-component case the existence theory  follows from nowadays well-known results on nonlinear diffusion
equations (cf. \cite{Va07,DK07}).  We note, however, that most numerical methods applied to the single species case keep the equation 
in the form  \eqref{eq-IBVP}, see  \cite{DDW94,DDG96, BK1,BK2,ADCC01}. 

The multi-component case is much trickier, as systems of nonlinear PDEs often are, and the only existence proof  known to us leans  heavily 
on a special structure of the system wherein the sum of the solute concentrations solve a scalar generalized porous medium equation, 
see \cite{KMV16}. 
Our system \eqref{eq-IBVP}  does not possess such structure but on the other hand $b$ satisfies the assumption of being a gradient 
of scalar function which the system studied in \cite{KMV16} did not, so the existence results are mutually exclusive. At the same time, we note that our numerical 
results for the two-component case are qualitatively very similar to the ones presented in \cite{KMV16}.

The paper is organized as follows. In Section \ref{Motivation}, we present an overview of the physical modeling of  competitive contaminant transport
in groundwater and derive our model problem. In Section \ref{Notation}, we introduce our notation and some auxiliary results needed for the proof of our main existence
result that is proved
in Section \ref{Existence}, where we start by giving a mathematically precise meaning to the solutions we are looking for 
and we introduce the structural assumptions 
imposed on the nonlinear isotherm $b$.  In Section
\ref{section:positivity}, making an additional assumption on the isotherm potential $\phi$, we show that the solution stays non-negative
provided the source data is componentwise non-negative which of course is to be expected on physical grounds.  Finally, in Section \ref{Numerics}, 
we present a simple finite-difference scheme which mimics our existence proof. We solve a two-component test problem and observe that the scheme 
captures quite well the propagation of free boundaries. We  also discuss the convergence rates of our numerical scheme in the one-component scalar
case where an explicit solution is known.

From a numerical point of view, the multi-component  contaminant transport problem has  been studied  previously   in \cite{ADCC01} and 
\cite{KMV16}. In  \cite{ADCC01}, the authors considered the (non-degenerate) Langmuir isotherm for  two-component advection-dominated contaminant 
transport using a Local Discontinuous Galerkin Scheme. In  \cite{KMV16}, the numerical  results were obtained by adapting the interface-tracking
scheme introduced, for scalar porous medium equation, in \cite{Mon16}.
The scheme presented in \cite{Mon16}
could be adapted for more general   Langmuir and  Freundlich type isotherms   (including our model problem), if the problem were first expressed 
as a system of nonlinear diffusion equations, but the finite speed of propagation property of free boundaries  discussed in  \cite{KMV16} 
can best be captured if the system has the special diagonal structure as has the problem studied  in  \cite{KMV16}.

\section{Physical modeling}\label{Motivation}
Let us briefly describe the equations governing the transport of one or more solutes through a fluid-saturated porous medium, i.e. a medium characterized by a partitioning of its
total volume into a solid phase (solid matrix)
and a void or pore space that is filled by one or more fluids, see \cite{AAR89,B72,BC10} for a more detailed description. Assume that the flow is at steady state and that the
transport is  described by advection, molecular diffusion, mechanical dispersion and chemical reaction (adsorption) between a solute
and the surrounding porous skeleton, and consider the processes on a macroscopic scale.

Let $\Omega \subset \R^d$ be the domain occupied by the porous medium and let $u=u(x,t)$ denote the solute concentration in the fluid phase.
In the one-species model (contamination by one solute) $u$  solves the equation
\begin{equation}\label{equi-model}\partial_t\, \big(\theta u+\rho s\big)+ \nabla \cdot (qu-D\nabla u)=f(u),\end{equation}
where  $\theta=\theta(x)>0$  is the
porosity,  $\rho>0$ is  the constant bulk mass density of the solid matrix, $q$ is the Darcy velocity of the flow ($qu$ is the advective water flux) and $D$ stands for the hydrodynamic dispersion matrix including both molecular diffusion and mechanical dispersion  between the solute
and the surrounding porous medium. Moreover,   $f$ models source/sink terms that may depend on $u$ and $s$ denotes the concentration of the contaminant  on the porous matrix ($\rho\,\partial_t s$ corresponds to the rate of change of concentration on the porous matrix due to adsorption or desorption).

Normally, $s$ is taken to satisfy an ordinary differential
equation of the form
\begin{equation}\label{s-ne}s_t=k\, F(u,s),
\end{equation}
where $k>0$ is the rate parameter and $F$ is the reaction rate function. In case of fast reaction (\emph{equilibrium adsorption}), we may let $k\rightarrow \infty$ in \eqref{s-ne} and, assuming that the equation $F(u,s)=0$ can be solved for $s$ we obtain
\begin{equation*}
\label{s-e}
s=\psi(u),
\end{equation*}
where
 $\psi=\psi(u)$ is called the {\it adsorption isotherm} satisfying, in general,  the monotonicity property
 \[
 \psi(0)=0\, , \qquad \psi \ {\rm is} \ {\rm strictly\  increasing \ and \ smooth \ for}\  u>0.
 \]
 Moreover, $\psi$ is said to be of Langmuir type if $\psi$ is strictly concave near $u=0$ and $\lim_{u\rightarrow 0+} \psi^\prime(u)<\infty$ and of Freundlich type if $\psi$ is strictly concave near $u=0$ and $\lim_{u\rightarrow 0+} \psi^\prime(u)= \infty$.
The standard Langmuir isotherm is given by
\begin{equation*}
\label{L-i}
\psi(u)=\frac{NKu}{1+Ku},\quad K>0 \, ,
\end{equation*}
where $N$ is the saturation concentration of the adsorbed solute, and the  archetypical Freundlich isotherm is defined by
\begin{equation*}
\label{F-i}
\psi(u)=K u^p, \quad K>0\,  , 
\end{equation*}
with the reaction rate $p$ commonly chosen in $(0,1)$; the smaller  the $p$, the higher the adsorption at low concentrations is.

  In the multi-species case (transport of several contaminants),  $u=(u_1,\ldots,u_m)$ is a vector-valued  function but the evolution of the concentration of each component  $u_i$ is still described by \eqref{equi-model}. However, the adsorption process is now competitive (different species competing for the same adsorption sites) thus making the PDE system coupled. The most common
multicomponent   isotherms are of the form
$
\psi(u)=(\psi_1(u),...,\psi_m(u)) \, ,
$
with the components $\psi_i$ given by
\begin{equation}
\displaystyle \psi_{i}(u)=\frac{N_i\,K_{i} u_i}{1+\sum_{j=1}^{m} K_{j} u_j} \qquad \qquad \qquad  \ {\rm Langmuir}\, , 
\end{equation}
\begin{equation}
\displaystyle \psi_{i}(u)=C_{i} \, \Big(\sum_{j=1}^m a_{ij} u_j\Big)^{{p_i} - 1}\, u_i \qquad \qquad {\rm Freundlich} \, ,
\label{misos}
\end{equation}
see  \cite{SRS81,GF93,ADCC01,CCCS02,SMM06,L07}. Above
$N_i, C_{i},K_{i},  p_i $ and $a_{ij}$ are non-negative constants;  $N_i$ is  the adsorption capacity of the adsorbent $i$, $K_i$ is the Langmuir
affinity constant, $C_{i}$ is the Freundlich adsorption constant, $p_i$ is the Freundlich adsorption rate and $a_{ij}$ are dimensionless 
competition coefficients describing the inhibition of species $i$ to the adsorption of species $j$. By definition $a_{ii}=1$ and $a_{ij}=0$ if 
there is no competition between the species $i$ and $j$.

The multicomponent Freundlich  isotherm \eqref{misos}, first proposed by Sheindorf et al. \cite{SRS81}, has been successfully applied to different contaminants, see \cite{SBB92,GF93,CCCS02}. However, it is not the only empirical Freundlich type (according to the definition above) isotherm that has been proposed in the literature, see, {\sl e.g.}, Hinz \cite{Hinz2001}. Here, we 
assume that the adsorption process is modelled by such a generalized Freundlich type multicomponent isotherm which can be idealized as
\begin{equation*}
\psi_i(u)=C |u|^{p-1} \, u_i  \, ,
\label{F-isotherm}
\end{equation*}
where $|\cdot|$ denotes the usual Euclidean norm in $\mathbb{R}^m$. For expediency,  we also assume that the coefficient functions in \eqref{equi-model} are all constants and that the source
term $f=(f_1,\ldots,f_m)$ does not depend on $u$.  
Ignoring advection in  \eqref{equi-model}, that is setting $q=0$, it follows  that
\begin{equation}
\theta \partial_t  \big( u_i+\rho\, C |u|^{p-1} \, u_i  \big)- D\, \Delta  u_i =f_i \, ,\quad  i=1,\ldots, m \, .
\label{simple-model}
\end{equation}
We may eliminate the constants in \eqref{simple-model} by  redefining the variables through
\[
u_i^\ast= \Big(\frac{\rho C}{\theta}\Big)^{1/(p-1)}u_i \, , \qquad t^\ast=\frac{t}{D}\, , \qquad x^\ast=\frac{\theta^{1/2}x}{D}\, , \qquad f_i^\ast= \frac{\theta}{D}\, \Big(\frac{\rho K}{\theta}\Big)^{1/(p-1)}  f_i \, .
\]
Dropping the stars yields the system of equations
\begin{equation}
\partial_t\, \big( u+  |u|^{p-1} \, u \big) - \Delta u =f \,  .
\label{final-model}\end{equation}
System \eqref{final-model} is of the form $\eqref{eq-IBVP}_1$, with $b(u)=  u+  |u|^{p-1} \, u =\nabla \phi (u)$, where
\begin{equation}
 \phi(u) =  \frac{1}{2}|u|^2+\frac{1}{p+1}\, |u|^{p+1}\, , \qquad \phi:\mathbb{R}^m\rightarrow \mathbb{R}\, ,
\label{potential}
\end{equation}
is a strictly convex function.


\section{Notation and preliminary results}
\label{Notation}

Let us briefly introduce the notation used throughout this work.
The Euclidean norm  in  $\mathbb{R}^m$ is denoted by $|\cdot|$, $C$ represents a generic positive constant,
 possibly varying from line to line, and we often write $u(t)(x)$ for $u(t,x)$.

 Given a real Banach space $X$, the (Banach) space $L^{p}(0,T; X)$  consists of
 all measurable
 functions $u:[0,T]\to X$ such that
\[
\|u\|_{L^p(0,T;X)}=\left(\int_{0}^{T} \|u(t)\|_{X}^{p}\, dt\right)^{\frac{1}{p}}
<\infty\, , \qquad  1\leq p<\infty\, ,
\]
 $L^{\infty}(0,T; X)$  is the space of all measurable
 $u:[0,T]\to X$ such that
\[
\|u\|_{L^{\infty}(0,T; X)}=\sup_{t\in (0,T)} \|u(t)\|_{X}
<\infty\, ,
\]
and the Banach space $W^{1,p}(0,T; X)$, for $1\leq p<\infty$, consists of all $u\in L^{p}(0,T; X)$ such that $\partial_{t}u$ exists in the weak sense and belongs to $L^{p}(0,T; X)$.
We recall that $u \in L^{1}(0,T; X)$ is weakly differentiable with $\partial_{t}u \in L^{1}(0,T; X)$ if and only if  for some $g\in X$ it holds
 \begin{equation}\label{b2}u(t)=g + \int_{0}^{t} \partial_{t}u(s)\, ds \quad \quad \text{a.e. \ in}\ (0, T). \end{equation}
Moreover, if $u\in W^{1,p}(0,T; X)$ then  $u$ is continuous as a function from $[0, T]$ to $X$, i.e. $u\in C([0, T]; X)$.

We write $H^k(\Omega)=W^{k,2}(\Omega)$ and $u\in H^k(\Omega;\mathbb{R}^m)$ if each component of $u:\Omega\rightarrow \mathbb{R}^m$ belongs to $H^k(\Omega)$.
The action of an element $g\in H^{-1}(\Omega; \R^m)$ on $\xi\in H^{1}_{0}(\Omega; \R^m)$ is denoted by
 $\langle g, \xi\rangle$.  When no confusion arises,
  we write $\langle g, \xi\rangle$, instead of $\langle g(t), \xi(t)\rangle$,  also for $g\in L^2(0,T; H^{-1}(\Omega; \R^m))$ and
  $\xi\in L^2(0,T; H_{0}^{1}(\Omega; \R^m))$.

  Given a measurable function $u:[0,T]\to X$ and $h>0$,  we define the backward time difference quotient  of $u$
 by
$$\partial^h_t u(t,x) =\frac{u(t,x)-u(t-h,x)}{h}\, , \qquad \partial^h_t u:[h,T]\to X \, . $$

\begin{lem}\label{ebtd}
Let $u \in W^{1,p}(0,T; X)$, $1\leq p<\infty$. For any $h>0$, it holds
 $$||\partial^h_t u||_{L^{p}(h,T; X)}\leq ||\partial_{t}u||_{L^{p}(0,T; X)}.$$
 \end{lem}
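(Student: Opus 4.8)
The plan is to prove the inequality by writing the difference quotient as an average of the weak derivative and then applying Jensen's inequality (or Minkowski's integral inequality) together with Fubini's theorem. First I would use the characterization \eqref{b2}: since $u\in W^{1,p}(0,T;X)$, for $h>0$ and almost every $t\in[h,T]$ we have
\[
u(t)-u(t-h)=\int_{t-h}^{t}\partial_t u(s)\,ds,
\]
so that
\[
\partial^h_t u(t)=\frac1h\int_{t-h}^{t}\partial_t u(s)\,ds=\int_0^1 \partial_t u(t-h+\tau h)\,d\tau.
\]

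Next, taking the norm in $X$ and applying Jensen's inequality with the convex function $r\mapsto r^p$ (or, equivalently, Minkowski's integral inequality) to the probability measure $d\tau$ on $[0,1]$, I would obtain
\[
\|\partial^h_t u(t)\|_X^p\le \int_0^1 \|\partial_t u(t-h+\tau h)\|_X^p\,d\tau.
\]
Then I would integrate in $t$ over $[h,T]$, swap the order of integration by Fubini (all integrands being non-negative and measurable), and for each fixed $\tau\in[0,1]$ perform the change of variables $s=t-h+\tau h$, noting that as $t$ ranges over $[h,T]$ the variable $s$ ranges over $[\tau h, T-h+\tau h]\subset[0,T]$. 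This yields
\[
\int_h^T\|\partial^h_t u(t)\|_X^p\,dt\le \int_0^1\Big(\int_{\tau h}^{\,T-h+\tau h}\|\partial_t u(s)\|_X^p\,ds\Big)d\tau\le \int_0^1\|\partial_t u\|_{L^p(0,T;X)}^p\,d\tau=\|\partial_t u\|_{L^p(0,T;X)}^p,
\]
and taking $p$-th roots gives the claim.

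There is no serious obstacle here; the statement is essentially a Bochner-space version of the classical fact that difference quotients are bounded in $L^p$ by the $L^p$ norm of the derivative. The only points requiring a modicum of care are the measurability of $(t,\tau)\mapsto\|\partial_t u(t-h+\tau h)\|_X$ needed to invoke Fubini — which follows from the strong measurability of $\partial_t u$ — and the justification of Jensen's inequality in this vector-valued setting, which is immediate once the difference quotient has been rewritten as the scalar average $\int_0^1(\cdots)\,d\tau$ of the non-negative function $\tau\mapsto\|\partial_t u(t-h+\tau h)\|_X$. One could equally phrase the middle step as an application of Minkowski's integral inequality directly to the $X$-valued integral $\int_0^1\partial_t u(t-h+\tau h)\,d\tau$, which avoids even mentioning Jensen; either route is routine.
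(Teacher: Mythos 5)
Your proposal is correct and follows essentially the same route as the paper: both write the difference quotient as the average $\frac1h\int_{t-h}^t\partial_t u(s)\,ds$ via \eqref{b2}, apply a convexity estimate (your Jensen step with the probability measure $d\tau$ is exactly the paper's H\"older inequality in disguise), and conclude with Fubini and a change of variables. No gaps; the measurability remarks you add are fine but not essential.
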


\begin{proof} Given  $u \in W^{1,p}(0,T; X)$, it follows from \eqref{b2} that
 \[
 \frac{u(t)-u(t-h)}{h}=\frac{1}{h}\int_{t-h}^{t} \partial_{t}u(s)\, ds,\quad t\in [h,T]\, .
 \]
Hence
  \[\begin{array}{rl}
\displaystyle \Big|\Big|\frac{u(t)-u(t-h)}{h}\Big|\Big|_{X}  &\displaystyle  \leq\frac{1}{h}\int_{t-h}^{t} ||\partial_{t}u(s)||_{X}\, ds
= \frac{1}{h}\int_{0}^{h} ||\partial_{t}u(s+t-h)||_{X}\, ds \\ \\
 & \displaystyle  \leq  h^{\frac{1}{q}-1}  \Big(\int_{0}^{h} ||\partial_{t}u(s+t-h)||^{p}_{X}\, ds\Big)^{\frac{1}{p}}\,
\end{array}
\]
by H\"{o}lder's inequality, with $\frac{1}{p}+\frac{1}{q}=1$.  Thus, we conclude that
  \[\begin{array}{rl}
\displaystyle  \int_{h}^{T}\Big|\Big|\frac{u(t)-u(t-h)}{h}\Big|\Big|^p_{X}\, dt  &\displaystyle  \leq
\frac{1}{h}\int_{0}^{h} \Big(\int_{h}^{T} ||\partial_{t}u(s+t-h)||^p_{X}\,dt\Big)\, ds  \\ \\
&\displaystyle \leq \frac{1}{h}\int_{0}^{h} \Big(\int_{0}^{T} ||\partial_{\tau}u(\tau)||^p_{X}\,d\tau\Big)\, ds
= \int_{0}^{T} ||\partial_{\tau}u(\tau)||^p_{X}\,d\tau \, .
\end{array}
\]
using Fubini's theorem.
\end{proof}

Let $h>0$ be a time step. For simplicity take an equidistant partition of the interval $[0 \,\, T]$ and assume  that $T/h$ is an
integer. Consider a sequence of such time intervals $\{h_k\}_{k\in \N}$ with $h_k\debaixodaseta {} {k\to \infty} 0$.
Again, for simplicity, we ommit the index $k$ and write  $h\to 0$ instead of $h_k\debaixodaseta {} {k\to \infty} 0$. Given $w\in  L^{2}(0,T; X)$, we define $w_h$
as the zeroth-order Cl\'ement quasi-interpolant of $w$. In other words, given $n\equiv n(h)\in \{\,1,...,T/h\}$ we let
\begin{equation*}
w^{(n)}=\displaystyle\frac{1}{h}\int_{(n-1)h}^{nh}w(s)\, ds
\end{equation*}
 and define $w_h\in L^{\infty}(0,T; X)$ through
\begin{equation}
w_h (t)=w^{(n)}\,\,\, \text{for}\,\,\, t\in ((n-1)h, nh],
\label{global}
\end{equation}
and, if needed, $w_{h}(t)= w^{(1)}$ for $t\in (-h, 0]$.
 \begin{lem}
Given $w\in L^{2}(0,T;X)$, then
\begin{equation*}
\label{jensen}
\|w_h\|_{L^2(0,T;X)}\leq \|w\|_{L^2(0,T;X)}.
\end{equation*}
Moreover,  assuming that
$\partial_t w \in L^{2}(0,T;X)$ $($with obvious modifications if $w_h$ is extended to $(-h,0])$, it holds
\begin{equation}\label{jensend}
\|\partial_{t}^{h}w_h\|_{L^2(h,T;X)} \leq \|\partial_{t}w\|_{L^2(0,T;X)} \, .
\end{equation}
 \begin{proof} From the Jensen's inequality, it follows that
  \begin{align*}
 \|w_h\|^2_{L^2(0,T;X)} & =\int_0^T\|w_h(t)\|^2_X\, dt = \sum _{n=1}^{T/h} h \|w^{(n)}\|^2_X
  = \sum _{n=1}^{T/h} h  \left|\left|\frac{1}{h}\int_{(n-1)h}^{nh}w(s)\, ds\right|\right|_X^2 \nonumber \\
& \leq \sum _{n=1}^{T/h} \int_{(n-1)h}^{nh} \left|\left| w(s)\right|\right|_X^2 \, ds  = \|w\|^2_{L^2(0,T;X)}  \, .
 \end{align*}

Similarly, using again the Jensen's inequality as well as Lemma \ref{ebtd}, we obtain
 \begin{align*}
 \|\partial_{t}^{h}w_h\|^2_{L^2(h,T;X)} & =\int_h^T\|\partial_{t}^{h}w_h\|^2_X\, dt
 = \sum _{n=2}^{T/h} \frac{1}{h}\, \Big\|w^{(n)}-w^{(n-1)}\Big\|^2_X  \\
 & =\sum _{n=2}^{T/h} \frac{1}{h}\, \Big\|\frac{1}{h}\int_{(n-2)h}^{(n-1)h}(w(s+h)-w(s))\,  {ds} \Big\|^2_X
 \\
 &  \leq \ \sum _{n=2}^{T/h} \int_{(n-2)h}^{(n-1)h}\left\|\frac{w(s+h)-w(s)}{h}\right\|^2_{X}\, ds
 \\
 & = \int_0^{T-h}\|\partial_t^h w(s)\|^2_X ds
 =\|\partial_t^h w\|_{L^2(0,T-h;X)}^2 \leq \|\partial_{t} w \|^2_{L^2(0,T;X)} \, .
 \end{align*}
 \end{proof}
\label{whlemma}
 \end{lem}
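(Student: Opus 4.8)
The plan is to establish both estimates by the same mechanism: express each quantity of interest as an average over a subinterval of length $h$ and then apply Jensen's inequality to the convex map $v\mapsto\|v\|_X^2$.

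First I would prove the bound $\|w_h\|_{L^2(0,T;X)}\le\|w\|_{L^2(0,T;X)}$. Since, by \eqref{global}, $w_h$ is constant equal to $w^{(n)}$ on $((n-1)h,nh]$, one has $\|w_h\|_{L^2(0,T;X)}^2=\sum_{n=1}^{T/h}h\,\|w^{(n)}\|_X^2$. As $w^{(n)}=\tfrac1h\int_{(n-1)h}^{nh}w(s)\,ds$ is a mean of $w$, Jensen's inequality yields $\|w^{(n)}\|_X^2\le\tfrac1h\int_{(n-1)h}^{nh}\|w(s)\|_X^2\,ds$, and summing over $n$ telescopes the right-hand side into $\int_0^T\|w(s)\|_X^2\,ds$, which is exactly what is claimed.

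For the derivative estimate \eqref{jensend} I would first compute $\partial_t^h w_h$ explicitly. On $((n-1)h,nh]$ with $n\ge2$ we have $w_h(t)=w^{(n)}$ and $w_h(t-h)=w^{(n-1)}$, so $\partial_t^h w_h(t)=\tfrac1h\bigl(w^{(n)}-w^{(n-1)}\bigr)$ is constant there, and hence $\|\partial_t^h w_h\|_{L^2(h,T;X)}^2=\sum_{n=2}^{T/h}\tfrac1h\|w^{(n)}-w^{(n-1)}\|_X^2$. The key observation is that the difference of two consecutive averages is itself the average of a difference: after a change of variables in the $n$-th integral, $w^{(n)}-w^{(n-1)}=\tfrac1h\int_{(n-2)h}^{(n-1)h}\bigl(w(s+h)-w(s)\bigr)\,ds$. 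Applying Jensen's inequality again gives $\tfrac1h\|w^{(n)}-w^{(n-1)}\|_X^2\le\int_{(n-2)h}^{(n-1)h}\bigl\|\tfrac{w(s+h)-w(s)}{h}\bigr\|_X^2\,ds$, and summing over $n$ produces $\int_0^{T-h}\|\partial_t^h w(s)\|_X^2\,ds=\|\partial_t^h w\|_{L^2(0,T-h;X)}^2$. To conclude I would invoke Lemma \ref{ebtd}, which bounds this last quantity by $\|\partial_t w\|_{L^2(0,T;X)}^2$.

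I do not expect any real analytic difficulty here; the only points demanding care are bookkeeping ones — correctly matching the index shift when rewriting $w^{(n)}-w^{(n-1)}$ as an integral of $w(\cdot+h)-w(\cdot)$, keeping the integration endpoints so that the final integral runs over $(0,T-h)$ and not $(0,T)$ (which is precisely what allows Lemma \ref{ebtd} to apply), and treating the extension $w_h\equiv w^{(1)}$ on $(-h,0]$ consistently when that convention is used. Once the averaging/telescoping structure is in place, both inequalities drop out immediately, with Lemma \ref{ebtd} supplying the last step of the derivative bound.
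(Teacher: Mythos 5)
Your argument is correct and follows essentially the same route as the paper: Jensen's inequality applied to the averages $w^{(n)}$ for the first bound, and for the second the rewriting of $w^{(n)}-w^{(n-1)}$ as an average of $w(\cdot+h)-w(\cdot)$ over $((n-2)h,(n-1)h)$, followed by Jensen and then Lemma \ref{ebtd} applied to $\|\partial_t^h w\|_{L^2(0,T-h;X)}$. The bookkeeping points you flag (index shift, integration over $(0,T-h)$) are exactly the ones the paper's computation handles, so nothing is missing.
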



We recall (cf. \cite{FL}) that  a function $E: X\to [-\infty,\infty]$ is said to be
sequentially lower semicontinuous,  s.l.s.c. for short, if for
any $\{u_{n}\}$   converging (in $X$) to $u$, it holds
$$ E (u)\leq \liminf_{n\to\infty}  E (u_{n})\, ,$$
and that if $X$ is a reflexive Banach space
and $E(u)\to
\infty$ as $||u||_{X}\to \infty$, then  $ E$ is
sequentially coercive with respect to the weak topology of $X$.








Let $\Psi: \R^m \to [0,\infty]$ be the Legendre conjugate of ${\phi}$, i.e.
$$ \Psi(y)= \sup_{z\in \R^m} (z\cdot y-{\phi}(z))$$
and let $B: \R^m \to [0,\infty]$ be given by
\begin{equation}\label{Bdef}
B(z):=\Psi(b(z)).\end{equation}
Given that $\phi$ is convex and $b(z)=\nabla \phi(z)$, we have (see, {\sl  e.g}, \cite{R97})
\begin{equation}\label{B}B(z)=z\cdot b(z)-{\phi}(z),\end{equation}
or, equivalently, since $\phi(0)=0$
$$
B(z)=\int_{0}^{1} \Big(b(z)-b(sz)\Big)\cdot z\,ds, \qquad z\in \R^m.
$$



\begin{lem}
Given $\delta >0$, it holds
 \begin{equation}\label{Bdef-TL-estimate}|b(z)|\leq \delta B(z) + \max_{|w|\leq 1/\delta} |b(w)| \, \qquad \forall\, z\in \R^m\, .\end{equation}
 \label{bBlemma}
 \end{lem}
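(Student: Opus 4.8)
The plan is to split on the size of $z$ relative to $1/\delta$. If $|z|\le 1/\delta$, the estimate is essentially free: by definition $|b(z)|\le\max_{|w|\le 1/\delta}|b(w)|$, while $B(z)=\Psi(b(z))\ge 0$ since $0$ is an admissible competitor in $\Psi(y)=\sup_{w\in\R^m}\big(w\cdot y-\phi(w)\big)\ge-\phi(0)=0$; as $\delta B(z)\ge 0$, adding it to the right-hand side only helps. So the substantive case is $|z|>1/\delta$, and there I will show directly the sharper inequality $\delta B(z)\ge|b(z)|-\max_{|w|\le 1/\delta}|b(w)|$.

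Fix $z$ with $|z|>1/\delta$ and set $\sigma:=\frac{1}{\delta|z|}\in(0,1)$, so that $|sz|=s|z|\le\sigma|z|=1/\delta$ for every $s\in[0,\sigma]$. I would start from the integral representation
\[
B(z)=\int_0^1\big(b(z)-b(sz)\big)\cdot z\,ds
\]
displayed right after \eqref{B}. Since $b=\nabla\phi$ is a monotone operator, $\big(b(z)-b(sz)\big)\cdot(z-sz)\ge 0$, that is $(1-s)\big(b(z)-b(sz)\big)\cdot z\ge 0$; hence the integrand is non-negative on $[0,1]$ and restricting the integral to $[0,\sigma]$ can only lower it:
\[
B(z)\ \ge\ \int_0^{\sigma}\big(b(z)-b(sz)\big)\cdot z\,ds\ =\ \sigma\,\big(b(z)\cdot z\big)-\int_0^{\sigma}b(sz)\cdot z\,ds.
\]
I then use two elementary facts about $b(z)=z+|z|^{p-1}z=(1+|z|^{p-1})z$: it is a non-negative scalar multiple of $z$, so $b(z)\cdot z=|b(z)|\,|z|$ and $\sigma\,(b(z)\cdot z)=|b(z)|/\delta$; and for $s\in[0,\sigma]$ one has $b(sz)\cdot z\le|b(sz)|\,|z|\le|z|\max_{|w|\le 1/\delta}|b(w)|$, whence $\int_0^{\sigma}b(sz)\cdot z\,ds\le\sigma|z|\max_{|w|\le 1/\delta}|b(w)|=\frac{1}{\delta}\max_{|w|\le 1/\delta}|b(w)|$. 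Inserting both estimates into the last display gives $B(z)\ge\frac{1}{\delta}\big(|b(z)|-\max_{|w|\le 1/\delta}|b(w)|\big)$, which is the asserted bound after multiplication by $\delta>0$.

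I do not anticipate a genuine obstacle — the computation is short — but two points deserve a little care. First, one should invoke the monotone-operator inequality for $b$ rather than differentiating the ray map $s\mapsto b(sz)$, since $b$ is merely continuous (not $C^1$) at the origin when $p<1$. Second, the parallelism $b(z)\cdot z=|b(z)|\,|z|$ is exactly what allows the truncated integral to recover the full Euclidean norm $|b(z)|$ and not just the inner product $b(z)\cdot z$; apart from this feature the argument uses nothing special and in fact goes through verbatim whenever the potential $\phi$ is radially symmetric.
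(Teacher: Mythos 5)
Your argument is internally consistent and does prove the estimate for the model isotherm $b(z)=(1+|z|^{p-1})z$, but it does not prove the lemma in the generality in which it is stated and used. The lemma lives under the standing assumptions of Sections 3--4, where $b=\nabla\phi$ for an arbitrary non-negative convex $C^1$ potential with $\phi(0)=0$, $b(0)=0$ (and it is invoked at that level, e.g.\ to deduce $b_0\in L^1$ from $\Psi(b_0)\in L^1$ and inside Lemma \ref{lemma-1.9}). The step where you pass from the truncated integral to the conclusion relies on the identity $b(z)\cdot z=|b(z)|\,|z|$, i.e.\ on $b(z)$ being a non-negative multiple of $z$. That is a special feature of radial potentials and fails in general: already for $\phi(u)=\tfrac12\,u^{\top}Au$ with $A$ symmetric positive definite but not a multiple of the identity, one has $b(z)=Az$ and $b(z)\cdot z=z^{\top}Az$ can be much smaller than $|Az|\,|z|$. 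In that situation your truncated integral only yields
\begin{equation*}
B(z)\;\ge\;\frac{1}{\delta}\,\frac{b(z)\cdot z}{|z|}\;-\;\frac{1}{\delta}\max_{|w|\le 1/\delta}|b(w)|,
\end{equation*}
which controls the projection of $b(z)$ onto the direction of $z$ but not the Euclidean norm $|b(z)|$ appearing on the left of \eqref{Bdef-TL-estimate}. You flag this restriction yourself at the end, but as written the proposal establishes a weaker statement than the lemma.

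The paper's proof sidesteps the alignment issue entirely by working with the Legendre conjugate rather than the integral representation: for $b(z)\neq 0$ one tests the supremum defining $\Psi(b(z))=B(z)$ with the competitor $w=\frac{1}{\delta|b(z)|}b(z)$, which has norm exactly $1/\delta$ and automatically produces the full norm, $B(z)\ge \frac{1}{\delta}|b(z)|-\max_{|\zeta|=1/\delta}\phi(\zeta)$; the remaining term is bounded by $\frac{1}{\delta}\max_{|w|\le 1/\delta}|b(w)|$ via the mean value theorem applied to $\phi(\zeta)-\phi(0)$. This is shorter than your case split and valid for every convex $\phi$ with $\phi(0)=0$. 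If you want to keep your route, you would need to replace the ray $s\mapsto sz$ by a path adapted to the direction of $b(z)$ --- at which point you are essentially reconstructing the duality argument --- so I would recommend adopting the conjugate-function proof.
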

  \begin{proof} If $|b(z)|\neq 0$, it follows   from \eqref{Bdef}  that
\begin{equation*}\begin{split}
B(z)&\geq b(z)\cdot \frac{1}{\delta |b(z)|}b(z) - \phi\left(\frac{1}{\delta |b(z)|}b(z)\right)\\
&\geq \frac{1}{\delta}|b(z)| - \max_{|\zeta|= 1/\delta} \phi(\zeta)
\geq \frac{1}{\delta} |b(z)| - \frac{1}{\delta}\max_{|w|\leq 1/\delta} |b(w)|,
\end{split}\end{equation*}
where in the last inequality we have used the Mean Value Theorem. 
If $\|b(z)\|= 0$ the result is trivial.  \end{proof}

\begin{rmk}
The convexity of $\phi$ and \eqref{B} imply that
 \begin{equation}\label{B-ineq}
 B(z)-B(w)\geq \big(b(z)-b(w)\big)\cdot w  \qquad  \forall z, w\in \R^m\, . \end{equation}

\end{rmk}

\section{Problem statement and existence of a weak solution}
\label{Existence}

  As mentioned in the Introduction, our aim in this work is to study 
the  solute concentrations $u: [0,T]\times  \Omega \to \R^m$ solving the initial-boundary value problem
\begin{equation}
\left\{
\begin{array}{ll}
\partial_t b(u(t,x))-\Delta u(t,x)=f(t,x)	\qquad&	 \text{in }(0,T)\times\Omega\\
u(t,x)=u_b(t,x)		&	\text{on }(0,T)\times\Gamma\\
b(u(0,x))=b_0(x)& \text{in }\Omega
\end{array}
\right.
\label{eq:rIBVP}
\end{equation}
where $x=(x_1,\ldots, x_d)$, $\bigtriangleup= \sum_{i=1}^d \frac{\partial^2}{\partial x_i^2} $, $\partial_t = \frac{\partial}{\partial t}$, $\Omega\subset \R^d$  is a smooth domain, $\Gamma=\partial\Omega$, $0<T<\infty$, and $f: [0,T]\times \Omega \to \R^m$, $b_0: \Omega \to \R^m$ and
$u_b: [0,T]\times \Gamma \to \R^m$ are given functions.  We assume that  the vector field $b: \R^m \to \R^m$ can be expressed as a gradient of  some non-negative $C^1$-convex function $\phi: \R^m\to \mathbb{R}$, with
$b(0)=0$ and $\phi(0)=0$, and that there exists a measurable  function $u_0:\Omega \to \R^m$ such that $b(u_0)=b_0.$


To define a weak solution to problem   \eqref{eq:rIBVP}
let us assume that $u_b\in L^2\big(0,T; H^{1/2}(\Gamma; \R^m)\big)$,   $f\in L^2\big(0,T; H^{-1}(\Omega; \R^m)\big)$ and $b_0: \Omega \to \R^m$ is such that
$\Psi(b_0)\in L^1 (\Omega;[0,\infty])$ (note that from Lemma~\ref{bBlemma} this last condition implies that $b_0\in L^1 (\Omega;\R^m)$).

Let $X$ be the function space defined through
\begin{equation*}\begin{array}{l}
 X=  \Big\{v\in L^2\big(0,T; H^{1}(\Omega; \R^m)\big): \\
\qquad \quad  b(v)\in L^{\infty}(0,T;L^{1}(\Omega; \R^m)),\,\, \partial_t b(v)\in L^2\big(0,T; H^{-1}(\Omega; \R^m)\big)\Big\}.
\end{array} \end{equation*}
\begin{defi}  We   say that $u\in X$ is a {\it weak solution} of  \eqref{eq:rIBVP} if

\begin{itemize}
\item[\bf a)] $u|_{(0,T)\times \Gamma}=u_b$;

\item[\bf b)] $\forall \xi\in L^2(0,T; H_{0}^{1}(\Omega; \R^m)) $ it holds
\begin{equation*}
 \int_{0}^{T}\big\langle\partial_t b(u),\xi\big> \, dt + \int_{0}^{T} \int_{\Omega} \nabla u \cdot \nabla \xi \, dx\, dt= \int_{0}^{T}   \langle f,
 \xi \rangle \, dt  \, ;
\end{equation*}

\item[\bf c)] $\forall \xi \in L^2\big(0,T; H_{0}^{1}(\Omega; \R^m)\big)\cap W^{1,1}\big(0,T;L^{\infty}(\Omega; \R^m)\big)$ with
 $\xi (T)=0$, we have
\begin{equation*}
\int_{0}^{T} \langle \partial_{t}b(u),\xi\rangle\, dt + \int_{0}^{T}\int_{\Omega} b(u) \cdot \partial_t \xi\, dx\, dt+
\int_{\Omega} b_0 \cdot \xi(0)\, dx\, =0 \, .
\end{equation*}

 \end{itemize}
\label{weaksol}
 \end{defi}
Let us  impose the following growth conditions on the vector field $b$:

\begin{enumerate}




   \item[($H_1$)] There exists $C>0$ such that
 \begin{equation*}
  |b(u)|\leq C(|u|+1)\, .
 \end{equation*}

  \item[($H_2$)] There exists $C>0$ such that
 \begin{equation*}
  |b(u)|^2\leq C(B(u)+1)\, .
 \end{equation*}

 \end{enumerate}

\begin{rmk}
The nonlinear isotherm $b(u)= u +|u|^{p-1}u$ associated with the competitive contaminant  transport potential \eqref{potential} satisfies the growth conditions  $(H_1)$ and $(H_2)$. Note that in that case $B(u)= \frac{1}{2} |u|^2+\frac{p}{p+1} |u|^{p+1}$.
\end{rmk}

We will use  Rothe's method (see \cite{Rou05}) to approximate  the initial-boundary value problem \eqref{eq:rIBVP}. It means that we will first discretize 
the time variable
and consider a nonlinear elliptic system at each time step. Next, we introduce a convex energy functional and show that it admits a unique minimizer
which also solves the elliptic system. Finally, to
 conclude that the sequence of semi-discretized solutions converges to a solution of the original problem,
we use a compactness argument based on suitable a priori estimates for the approximated solutions. The hardest part is to prove compactness
in time; here we follow the approach in \cite{AL83}.

Let $f_h$ and $u_{h,b}$ be the zeroth-order Cl\'ement quasi-interpolants of the functions $f$ and
$u_b$, respectively (see \eqref{global}).
Let $u^{(1)} \in H^1(\Omega;\mathbb{R}^m)$  be such that  for all $\xi\in H^{1}_0(\Omega; \R^m)$  it holds
\begin{equation}\label{u1}\int\limits_{\Omega}\frac{b(u^{(1)}(x))-b_0(x)}{h}\cdot \xi(x)\, dx+\int\limits_{\Omega}
\nabla u^{(1)}(x)\cdot\nabla\xi (x)\, dx=
 \langle f^{(1)}, \xi\rangle \end{equation}
 and $u^{( 1)}|_{\Gamma}={u^{(1)}_{b}}$.
In general, for $n=2,...,T/h$, and given $u^{(n-1)} \in H^1(\Omega; \R^m)$,  let $u^{(n)} \in H^1(\Omega; \R^m)$ be a solution to the problem
  \begin{equation}\label{un}\int\limits_{\Omega}\frac{b(u^{(n)}(x))-b(u^{(n-1)}(x))}{h}\cdot \xi(x)\, dx+\int\limits_{\Omega}
   \nabla u^{(n)}(x)\cdot\nabla\xi (x)\, dx=
 \langle f^{(n)}, \xi\rangle\end{equation}
 for all $\xi\in H^{1}_0(\Omega; \R^m)$, with $u^{(n)}|_{\Gamma}={u^{(n)}_{b}}$.
 The existence of  such functions $u^{(n)}$ is established in the following
lemma (see also Remark
 \ref{explain-un}).

 \begin{lem} \label{approx-min-problem}
Let $h>0$ and assume that $\o{u}\in L^2(\Omega; \R^m)$,  $g\in H^{-1}(\Omega; \R^m)$, $w\in H^{1/2}(\Gamma; \R^m)$
 and  that hypothesis $(H_1)$ holds.  Then the variational problem: find $u\in H^{1}(\Omega; \R^m)$ such that $u|_{\Gamma}=w$ and
\begin{equation}
  \int\limits_{\Omega}\frac{b(u(x))-b(\o{u}(x))}{h}\cdot \xi(x)\, dx+\int\limits_{\Omega} \nabla u(x)\cdot\nabla\xi (x)\, dx=
 \langle g, \xi\rangle\, ,
 \label{eq:weak_formulation_difference_quotient}
\end{equation}
\noindent for all $\xi\in H^{1}_0(\Omega; \R^m)$, admits at least one solution.
\label{prop:exist_time_difference_quotient}
\end{lem}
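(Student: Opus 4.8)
The plan is to obtain $u$ as the minimizer of a suitable convex energy functional, following the variational strategy announced before the lemma. Define $J:H^1(\Omega;\R^m)\to(-\infty,\infty]$ by
\begin{equation*}
J(v)=\int_\Omega \phi(v(x))\,dx - \int_\Omega b(\o u(x))\cdot v(x)\,dx + \frac h2\int_\Omega|\nabla v(x)|^2\,dx - h\,\langle g,v\rangle,
\end{equation*}
restricted to the affine set $\mathcal A=\{v\in H^1(\Omega;\R^m):v|_\Gamma=w\}$, which is nonempty since $w\in H^{1/2}(\Gamma;\R^m)$ admits an $H^1$-extension. First I would check that $J$ is well-defined and finite on $\mathcal A$: the gradient term and the linear $H^{-1}$-pairing are standard, the term $\int_\Omega b(\o u)\cdot v$ is controlled by $(H_1)$ together with $\o u\in L^2$ and $v\in H^1\hookrightarrow L^2$, and $\int_\Omega\phi(v)$ is finite because $\phi$ is $C^1$ convex with $\phi(0)=0$, hence has at most quadratic growth on bounded sets and in general $0\le\phi(v)\le b(v)\cdot v$, again controlled via $(H_1)$.

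Next I would establish existence of a minimizer by the direct method. Convexity: $\phi$ is convex, the Dirichlet term $v\mapsto\frac h2\int|\nabla v|^2$ is convex, and the remaining terms are linear in $v$, so $J$ is convex; combined with lower semicontinuity of each piece (weak l.s.c. of the convex quadratic gradient term, continuity of the linear terms under $L^2$/$H^{-1}$ pairing, and Fatou/convexity for $\int_\Omega\phi(v)$ along a subsequence converging a.e.) this gives sequential weak lower semicontinuity of $J$ on $H^1$. Coercivity: the $\frac h2\|\nabla v\|_{L^2}^2$ term together with the Poincaré-type inequality on $\mathcal A$ (difference of two elements of $\mathcal A$ lies in $H^1_0$) dominates the linear terms $-\int b(\o u)\cdot v$ and $-h\langle g,v\rangle$, which grow at most linearly in $\|v\|_{H^1}$, while $\int_\Omega\phi(v)\ge0$; hence $J(v)\to\infty$ as $\|v\|_{H^1}\to\infty$ within $\mathcal A$. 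Since $\mathcal A$ is a closed affine (hence weakly closed) subset of the reflexive space $H^1(\Omega;\R^m)$, the direct method yields a minimizer $u\in\mathcal A$.

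Finally I would derive the Euler–Lagrange equation. For $\xi\in H^1_0(\Omega;\R^m)$ and $\varepsilon\in\R$, $u+\varepsilon\xi\in\mathcal A$, so $\frac{d}{d\varepsilon}J(u+\varepsilon\xi)\big|_{\varepsilon=0}=0$. The gradient and linear terms differentiate routinely; for $\int_\Omega\phi(u+\varepsilon\xi)$ one differentiates under the integral sign using $\nabla\phi=b$ and dominated convergence, the domination coming from $(H_1)$ applied to $b(u+\varepsilon\xi)$ for $|\varepsilon|\le1$. This produces
\begin{equation*}
\int_\Omega b(u)\cdot\xi\,dx - \int_\Omega b(\o u)\cdot\xi\,dx + h\int_\Omega\nabla u\cdot\nabla\xi\,dx - h\langle g,\xi\rangle = 0,
\end{equation*}
which is exactly \eqref{eq:weak_formulation_difference_quotient} after dividing by $h$. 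I expect the main obstacle to be the finiteness and differentiability of the term $\int_\Omega\phi(v)\,dx$: one must make sure that $\phi(v)\in L^1(\Omega)$ for the competing $v$'s and justify passing the derivative inside, which is where $(H_1)$ (equivalently, the at-most-quadratic growth of $\phi$) is essential; everything else is standard convex-analysis and Sobolev-space bookkeeping.
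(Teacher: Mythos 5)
Your proposal is correct and follows essentially the same route as the paper: both minimize the convex functional $v\mapsto\int_\Omega\big(\phi(v)-b(\o u)\cdot v\big)\,dx+\tfrac h2\int_\Omega|\nabla v|^2\,dx-h\langle g,v\rangle$ (the paper writes it divided by $h$ and translates by an $H^1$-extension $\o w$ of $w$ so as to work on $H^1_0$ rather than on the affine set) and obtain \eqref{eq:weak_formulation_difference_quotient} as the Euler--Lagrange equation via the direct method. Your extra care with the finiteness of $\int_\Omega\phi(v)\,dx$ and with differentiating under the integral sign only makes explicit what the paper leaves implicit.
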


 \begin{proof}   Let     $E: H^1_0(\Omega; \R^m) \to [-\infty,\infty]$ be a functional defined through
\[\begin{array}{l}
\displaystyle E(v)=\int\limits_{\Omega}\frac{\phi(v(x)+\o{w}(x))-b(\o{u}(x))\cdot (v(x)+\o{w}(x))}{h}\, dx \\
\displaystyle \qquad \qquad + \,\frac{1}{2}\int\limits_{\Omega} |\nabla (v(x)+\o{w}(x))|^2\, dx-
 \langle g, v+\o{w}\rangle \, ,
\end{array}
\]
where $\o{w}\in H^1(\Omega;\mathbb{R}^m)$ is such that  $\o{w}$ coincides with $w$ in the trace sense on $\Gamma$.
Observe that   hypothesis $(H_1)$, the fact that $b=\nabla \phi$,  and the assumption on $\o{u}$
guarantee that $|E(v)|< \infty$ for all  ${v}\in H^1_0(\Omega; \R^m)$.
Moreover, if $v$ is a smooth critical point
of $E$  then  $u=v+\o{w}$ satisfies the Euler-Lagrange equation \eqref{eq:weak_formulation_difference_quotient} with $u|_{\Gamma}=w$.

Recalling that $\phi$ is a non-negative function,  the
  Poincar\'{e}, Cauchy-Schwarz and Young inequalities yield
$$E(v)\geq C\left[\|v\|_{H^1_0}^{2} - \|v\|_{H^1_0}-1\right] \, ,$$
for some $C=C(h,\|\o{u}\|_{L^2},\|\o{w}\|_{H^1},\|g\|_{H^{-1}})$ which shows that $E$ is coercive and thus   minimizing sequences  $\{v_k\}$ are bounded in $H^1_0$.
 In addition,  the functional $E$ is convex and  lower semicontinuous, thus  weakly lower semicontinuous, and $H^1_0(\Omega)$ is a reflexive Banach space
 so there exists a subsequence converging to an element $v$ (weakly in $H^1_0$ and strongly in $L^2$) which minimizes $E$.
Moreover,  $u=(v+\o{w})\in H^1(\Omega; \R^m)$ is a solution to \eqref{eq:weak_formulation_difference_quotient} with $u|_{\Gamma}=w$.
\end{proof}

\begin{rmk}\label{explain-un}
The existence of a solution $u^{(n)}$ to problem \eqref{un}, for $n\in \{2,...,T/h\}$, follows from Lemma  \ref{approx-min-problem} with $g=f^{(n)}$ and
$w=u_{b}^{(n)}$,  while the existence of a solution $u^{(1)}$ to problem \eqref{u1} can be established by replacing
  hypothesis $(H_1)$ by $(H_2)$ in the previous lemma. Note that $(H_2)$ implies $b_0\in  L^2(\Omega; \R^m)$.
\end{rmk}

 \begin{rmk} The solution in Lemma \ref{approx-min-problem} is unique since $\phi$ is strictly convex and thus $E$ is strictly convex.

 \end{rmk}

Let us now construct  the approximated solution of \eqref{eq:rIBVP} globally for all $t\in[0,T]$.
 Let ${u}_h \in L^{\infty}(0,T; H^{1}(\Omega))$ be the (piecewise constant in time) function defined by
\begin{equation}
\label{eq:def_interpolation}
{u}_h(t,x):=u^{(n)}(x)\quad\text{for }(t,x)\in((n-1)h,nh]\times \Omega,\,\,\,  n\in \{1,...,T/h\},
\end{equation}
where $u^{(n)}$ is the solution to problem \eqref{u1} if $n=1$ or  to problem \eqref{un} for $n\in \{2,...,T/h\}.$
We  extend ${u}_h(t)$ to  $(-h,0]$ by $u_{0}$.

By construction, $u_h$ satisfies the following result.

 \begin{lem}\label{sol-app-bvp} Let $h>0$ and assume that hypotheses $(H_1)$ and $(H_2)$ are valid. Then,
 for all $t\in (0,T)$ and all $\xi\in H^{1}_{0}(\Omega)$, the function ${u}_h$ satisfies
\begin{equation}\label{30}
  \int_{\Omega} \partial^{h}_{t}  b(u_h(t))\cdot \xi \, dx  +  \int_{\Omega} \nabla u_{h}(t) \cdot \nabla \xi \, dx=
 \langle f_h(t),
 \xi \rangle.
\end{equation}
 Moreover, $u_h|_{(0,T)\times \Gamma}= u_{h,b}$ and  for  all  $\xi \in L^2\big(0,T; H_{0}^{1}(\Omega; \R^m)\big)$ it holds
  \begin{equation*}
 \int_{0}^{T} \int_{\Omega} \partial^{h}_{t}  b(u_h(t))\cdot \xi(t) \, dx \, dt +   \int_{0}^{T}\int_{\Omega} \nabla u_{h}(t) \cdot \nabla \xi(t) \, dx\, dt=
 \int_{0}^{T} \langle f_h(t),
 \xi(t) \rangle \, dt \, .
\end{equation*}

\end{lem}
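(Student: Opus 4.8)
The statement to prove (Lemma \ref{sol-app-bvp}) is essentially bookkeeping: assembling the per-time-step elliptic equations \eqref{u1}--\eqref{un} into a single identity for the piecewise-constant interpolant $u_h$, both pointwise-in-time and integrated. The plan is as follows.

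\medskip

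\textbf{Step 1: Rewrite the discrete equations using the difference quotient.} Fix $t \in ((n-1)h, nh]$ for some $n \in \{1, \dots, T/h\}$. By construction \eqref{eq:def_interpolation}, $u_h(t) = u^{(n)}$, and since $u_h$ is extended by $u_0$ on $(-h,0]$ and $b(u_0) = b_0$, we have $u_h(t-h) = u^{(n-1)}$ for $n \geq 2$ and $b(u_h(t-h)) = b(u_0) = b_0$ for $n = 1$. Hence in all cases
\[
\partial_t^h b(u_h(t)) = \frac{b(u_h(t)) - b(u_h(t-h))}{h} = \frac{b(u^{(n)}) - b(u^{(n-1)})}{h}
\]
with the convention $b(u^{(0)}) := b_0$. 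Likewise $f_h(t) = f^{(n)}$ by \eqref{global}. Substituting these into \eqref{u1} (for $n=1$) or \eqref{un} (for $n \geq 2$) gives exactly \eqref{30} for every $\xi \in H^1_0(\Omega;\R^m)$; note that both $H_1$ and $H_2$ are invoked only to guarantee, via Lemma \ref{approx-min-problem} and Remark \ref{explain-un}, that the functions $u^{(n)}$ exist in $H^1(\Omega;\R^m)$, so that \eqref{30} makes sense.

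\medskip

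\textbf{Step 2: Boundary condition.} Again fix $t \in ((n-1)h, nh]$. By the construction of each $u^{(n)}$ we have $u^{(n)}|_\Gamma = u_b^{(n)}$ in the trace sense, and by definition of the quasi-interpolant $u_{h,b}(t) = u_b^{(n)}$ on this same interval. Therefore $u_h(t)|_\Gamma = u_{h,b}(t)$ for a.e. $t$, i.e. $u_h|_{(0,T)\times\Gamma} = u_{h,b}$.

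\medskip

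\textbf{Step 3: The integrated identity.} Let $\xi \in L^2(0,T; H^1_0(\Omega;\R^m))$. For a.e. $t$, $\xi(t) \in H^1_0(\Omega;\R^m)$, so Step 1 applies with test function $\xi(t)$:
\[
\int_\Omega \partial_t^h b(u_h(t)) \cdot \xi(t)\, dx + \int_\Omega \nabla u_h(t) \cdot \nabla \xi(t)\, dx = \langle f_h(t), \xi(t)\rangle \quad \text{for a.e. } t \in (0,T).
\]
Integrating this in $t$ over $(0,T)$ yields the claimed identity, provided each term is integrable in $t$; this follows from $u_h \in L^\infty(0,T;H^1)$, $b(u_h) \in L^\infty(0,T;L^1)$ (using $H_1$ to control $b(u^{(n)})$ by $|u^{(n)}|+1 \in L^2 \subset L^1$), $\partial_t^h b(u_h) \in L^2(0,T;H^{-1})$, $f_h \in L^2(0,T;H^{-1})$, and $\xi \in L^2(0,T;H^1_0)$, together with Fubini's theorem to interchange the $x$- and $t$-integrations where needed.

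\medskip

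Essentially no part of this is a real obstacle --- the content was already extracted into Lemma \ref{approx-min-problem}. The only point requiring a moment's care is the bookkeeping at the first time step (the convention $b(u^{(0)}) = b_0$ and the extension of $u_h$ by $u_0$ on $(-h,0]$), so that the single formula \eqref{30} covers $n=1$ uniformly with $n \geq 2$; and, for the integrated statement, checking that the integrands are genuinely in $L^1(0,T)$ so that the pointwise identity may be integrated term by term.
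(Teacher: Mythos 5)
Your proof is correct and follows exactly the route the paper intends: the paper states this lemma with no written proof at all, simply asserting it holds ``by construction,'' and your Steps 1--3 are precisely the bookkeeping (unfolding \eqref{eq:def_interpolation} and \eqref{global}, the convention $b(u^{(0)})=b_0$, the trace condition, and integration in $t$) that this assertion implicitly relies on. Nothing to correct.
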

 \noindent
We also have the  following "integration by parts" formula.
  \begin{lem}\label{sol-app-bvp-1} Let  ${u}_h\in L^\infty\big(0,T;H^1(\Omega)\big)$ and assume that conditions  $(H_1)$ and $(H_2)$ are valid.  Then
\begin{align*}
\displaystyle   \int_{0}^{T}\int_{\Omega} \partial^{h}_{t}  b(u_h(t))\cdot \xi(t) \, dx\, dt
  = - \int_{0}^{T-h}  \int_{\Omega}  b(u_h(t)) \cdot \partial^{h}_{t}  \xi(t+h) \, dx\, dt
\\
\displaystyle  +\  \frac{1}{h} \left(\int_{T-h}^{T}\int_{\Omega}  b(u_h(t))\cdot \xi(t) \, dx\, dt -
   \int_{0}^{h}\int_{\Omega}  b_{0}\cdot \xi(t) \, dx\, dt\right)\, ,
\end{align*}
 for all $\xi\in L^2\Big(0,T; H_{0}^{1}(\Omega; \R^m)\Big) \cap W^{1,1}\Big(0,T; L^{\infty}(\Omega; \R^m)\Big)$.
 \end{lem}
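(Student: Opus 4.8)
The statement is a discrete "integration by parts" (summation by parts / Abel summation) formula for the piecewise-constant interpolant $u_h$, written in integral form. The plan is to unpack the left-hand side using the definition of the backward difference quotient and the fact that $u_h$, hence $b(u_h)$, is piecewise constant on the intervals $((n-1)h,nh]$. Since $u_h$ is extended to $(-h,0]$ by $u_0$ and $b(u_0)=b_0$, the quantity $\partial_t^h b(u_h)$ is itself piecewise constant, equal to $\frac{1}{h}\big(b(u^{(n)})-b(u^{(n-1)})\big)$ on $((n-1)h,nh]$, with the convention $b(u^{(0)}):=b_0$. So the left-hand side is $\sum_{n=1}^{T/h}\int_\Omega \big(b(u^{(n)})-b(u^{(n-1)})\big)\cdot \frac1h\int_{(n-1)h}^{nh}\xi(t)\,dt\,dx$, and we reindex the sum (the classical Abel rearrangement) to move the difference from $b(u_h)$ onto the integrals of $\xi$.

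The execution would go as follows. First I would write
\[
\int_0^T\!\!\int_\Omega \partial_t^h b(u_h(t))\cdot\xi(t)\,dx\,dt=\sum_{n=1}^{T/h}\frac1h\int_\Omega\big(b(u^{(n)})-b(u^{(n-1)})\big)\cdot\Big(\int_{(n-1)h}^{nh}\xi(t)\,dt\Big)dx,
\]
with $b(u^{(0)})=b_0$. Splitting the sum and shifting the index $n\mapsto n+1$ in the part carrying $b(u^{(n)})$ gives a telescoping-type rearrangement: the generic interior term becomes $\frac1h\,b(u^{(n)})\cdot\big(\int_{(n-1)h}^{nh}\xi-\int_{nh}^{(n+1)h}\xi\big)$, which I recognize as $-\int_{(n-1)h}^{nh}b(u_h(t))\cdot\partial_t^h\xi(t+h)\,dx\,dt$ once I observe that on that interval $b(u_h(t))=b(u^{(n)})$ and $\frac1h\big(\xi(t+h)-\xi(t)\big)$ integrates over $((n-1)h,nh]$ to $\frac1h\big(\int_{nh}^{(n+1)h}\xi-\int_{(n-1)h}^{nh}\xi\big)$ after the change of variables $t\mapsto t-h$. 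Carrying the index shift through, the top boundary term $n=T/h$ produces $\frac1h\int_{T-h}^{T}\int_\Omega b(u_h(t))\cdot\xi(t)$ (there is no $\xi$ on $(T,T+h)$ in the shifted sum, reflecting that $\xi$ need not vanish at $T$), and the bottom term $n=1$ produces $-\frac1h\int_0^h\int_\Omega b_0\cdot\xi(t)$, coming from $b(u^{(0)})=b_0$. Collecting these three pieces is exactly the claimed identity.

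The only genuine subtlety — and the step I would treat most carefully — is the justification that the manipulations are legitimate for the class of test functions at hand, namely $\xi\in L^2(0,T;H^1_0)\cap W^{1,1}(0,T;L^\infty)$. Here $W^{1,1}(0,T;L^\infty)\subset C([0,T];L^\infty)$ (recalled in Section~\ref{Notation}), so $\xi$ has a well-defined continuous-in-time $L^\infty$-valued representative and the pointwise evaluations $\xi(t+h)$, as well as the boundary integrals near $t=0$ and $t=T$, make sense; moreover $b(u_h)\in L^\infty(0,T;L^1(\Omega))$ by $(H_1)$ (which also, via Lemma~\ref{bBlemma}, is what makes $b_0\in L^1$), so every integral written above is absolutely convergent by the $L^1$–$L^\infty$ pairing, and Fubini applies freely. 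The reindexing of a finite sum is elementary; all the real content is in matching the shifted finite sum termwise with the integral $-\int_0^{T-h}\int_\Omega b(u_h)\cdot\partial_t^h\xi(\cdot+h)$ via the change of variables $t\mapsto t-h$ on each subinterval, which is routine once the bookkeeping of endpoints is set up correctly. Condition $(H_2)$ is not strictly needed for this lemma beyond ensuring $u_h$ (in particular $u^{(1)}$) is well defined through Lemma~\ref{approx-min-problem}; I would keep the hypotheses as stated for consistency with the surrounding results.
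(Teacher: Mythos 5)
Your proposal is correct and follows essentially the same route as the paper: expand the left-hand side as a finite sum over the intervals $((n-1)h,nh]$ using the piecewise-constant structure of $b(u_h)$ (with $b(u^{(0)})=b_0$), perform Abel summation to shift the difference onto $\xi$, and identify the interior terms with $-\int_0^{T-h}\int_\Omega b(u_h(t))\cdot\partial_t^h\xi(t+h)$ and the two boundary terms with the stated $\frac1h$-averages near $t=T$ and $t=0$. Your added remarks on why the $L^1$--$L^\infty$ pairing and the evaluations $\xi(t+h)$ are legitimate go slightly beyond the paper's ``it is easy to see'' but do not change the argument.
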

\begin{proof} It is easy to see that
\[\begin{array}{l}
\displaystyle   \int_{0}^{T}\int_{\Omega} \partial^{h}_{t}  b(u_h(t))\cdot \xi(t) \, dx\, dt
  = \sum_{n=1}^{T/h}  \int_{(n-1)h}^{nh} \, \int_\Omega  \frac{b(u^{(n)})-b(u^{(n-1)})}{h}\cdot \xi(t)\, dx\, dt \\\\
  \qquad = - \,  \displaystyle   \sum_{n=1}^{T/h-1}  \int_{(n-1)h}^{nh} \, \int_\Omega b(u^{(n)})\cdot \frac{\xi(t+h)-\xi(t)}{h}\, dx\, dt \\ \\
\displaystyle   \qquad \qquad +\frac{1}{h}\, \bigg(\  \int_{T-h}^T \int_\Omega b(u^{(n)})\cdot\xi(t)\, dx\, dt
  -  \int_0^h \int_\Omega b(u^{(0)})\cdot\xi (t)\, dx\, dt \bigg)
  \\ \\
 \displaystyle   \qquad = -\, \int_0^{T-h} \int_\Omega b(u_h(t)\cdot \partial_t^h \xi(t+h)\, dx\, dt
 \\ \\
  \displaystyle \qquad \qquad +\ \frac{1}{h} \bigg(\int_{T-h}^T \int_\Omega  b(u_h(t))\cdot  \xi(t)\, dx\, dt -    \int_{0}^h \int_\Omega b_0\cdot \xi (t)\, dx\, dt \bigg)\, .
\end{array}
\]
\end{proof}

We will now present a series of auxiliary lemmas needed in proving the main existence result.
We will start by establishing an a priori estimate for smooth solutions. For that we need the following regularity assumption on the boundary data.
\begin{ass} Assume that the extension of $u_b\in L^2(0,T;H^{1/2}(\Omega;\mathbb{R}^m))$  to $\Omega$, still denoted by $u_b$, is such that
 \[
 u_b\in L^{\infty}(0,T; H^1(\Omega; \R^m))\,  \quad {\rm and} \quad   \partial_{t} u_b\in
   L^2(0,T; L^{2}(\Omega; \R^m))\, .
   \]
   \label{ubass}
\end{ass}

\begin{lem} Assume that problem \eqref{eq:rIBVP}  admits a smooth solution and that Assumption \ref{ubass} is valid. Then  the following a priori estimate holds
\begin{equation}\begin{array}{l}
\|B(u)\|_{L^{\infty}(0,T;L^1)}+ \|u\|_{L^{2}(0,T; H^{1})} \leq  C \big(1+\|\Psi(b_0)\|_{L^1}
\\  \\
 \qquad \qquad +\, \|f\|_{L^{2}(0,T;H^{-1})} +
 \|u_b\|_{L^{2}(0,T;H^1)} + \|\partial_ t u_b\|_{L^{2}(0,T;L^2)})\, ,
\end{array}
 \label{apriori}
 \end{equation}
for some $C=C(\Omega,T)$.
\label{smoothap}
\end{lem}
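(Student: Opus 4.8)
The plan is to derive the a priori estimate \eqref{apriori} by the standard energy method for elliptic–parabolic problems, testing the equation with $u - u_b$, which vanishes on $\Gamma$ and is therefore an admissible test function in $H^1_0(\Omega;\R^m)$. Concretely, I would take $\xi = u(t) - u_b(t)$ in $\eqref{eq:rIBVP}_1$ and integrate over $\Omega$ and over $(0,\tau)$ for an arbitrary $\tau \in (0,T]$. The diffusion term splits as $\int_\Omega \nabla u \cdot \nabla(u - u_b) = \int_\Omega |\nabla u|^2 - \int_\Omega \nabla u \cdot \nabla u_b$, where the first piece is the coercive term and the second is absorbed by Cauchy–Schwarz and Young, contributing $\|\nabla u_b\|_{L^2(0,T;L^2)}^2 \le \|u_b\|_{L^2(0,T;H^1)}^2$ to the right-hand side. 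The source term is handled by the $H^{-1}$–$H^1_0$ duality and Young's inequality, producing $\|f\|_{L^2(0,T;H^{-1})}^2$ plus a small multiple of $\|u\|_{L^2(0,T;H^1)}^2$. Poincaré's inequality on $u - u_b$ lets me dominate $\|u\|_{L^2(0,T;H^1)}$ (not merely the gradient) by $\|\nabla u\|_{L^2(0,T;L^2)}$ plus $\|u_b\|_{L^2(0,T;H^1)}$, which is why the full $H^1$ norm of $u$ appears on the left of \eqref{apriori}.

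The delicate term is the time-derivative contribution $\int_0^\tau \int_\Omega \partial_t b(u) \cdot (u - u_b)\,dx\,dt$. The plan is to split it as $\int_0^\tau \int_\Omega \partial_t b(u)\cdot u - \int_0^\tau \int_\Omega \partial_t b(u)\cdot u_b$. For the first piece I would invoke the chain-rule / integration-by-parts identity for the Legendre structure: since $b = \nabla\phi$ and $B(z) = z \cdot b(z) - \phi(z)$ as in \eqref{B}, one has $\frac{d}{dt}\int_\Omega B(u) = \int_\Omega \partial_t b(u) \cdot u$ for smooth $u$ (because $\partial_t B(u) = \partial_t[u\cdot b(u) - \phi(u)] = \partial_t u \cdot b(u) + u \cdot \partial_t b(u) - b(u)\cdot\partial_t u = u\cdot\partial_t b(u)$, using $\phi' = b$). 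Hence this piece equals $\int_\Omega B(u(\tau)) - \int_\Omega B(u(0)) = \int_\Omega B(u(\tau)) - \int_\Omega \Psi(b_0)$, recalling $B = \Psi\circ b$ from \eqref{Bdef} and $b(u(0)) = b_0$; this is precisely where $\|\Psi(b_0)\|_{L^1}$ enters and where the $\|B(u)\|_{L^\infty(0,T;L^1)}$ term on the left is produced (take the supremum over $\tau$ at the end). For the second piece, $\int_0^\tau\int_\Omega \partial_t b(u) \cdot u_b$, I would integrate by parts in time to move the derivative onto $u_b$: it becomes $-\int_0^\tau\int_\Omega b(u)\cdot\partial_t u_b + \int_\Omega b(u(\tau))\cdot u_b(\tau) - \int_\Omega b_0 \cdot u_b(0)$. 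The two boundary-in-time terms and the bulk term are then controlled using Lemma~\ref{bBlemma}: with $\delta$ chosen small, $|b(u)| \le \delta B(u) + \max_{|w|\le 1/\delta}|b(w)|$, so $\int_\Omega |b(u(\tau))|\,|u_b(\tau)| \le \delta\|u_b(\tau)\|_\infty \int_\Omega B(u(\tau)) + C(\delta)\|u_b(\tau)\|_{L^1}$, and similarly for the time integral against $\partial_t u_b$ using $\partial_t u_b \in L^2(0,T;L^2)$ together with Assumption~\ref{ubass} which guarantees $u_b \in L^\infty(0,T;H^1) \hookrightarrow L^\infty(0,T;L^2)$ and, for the pointwise-in-time terms, that $u_b \in C([0,T];L^2)$. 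The factor $\delta$ multiplying $\int_\Omega B(u(\tau))$ can be absorbed into the left-hand side provided $\delta \sup_t \|u_b(t)\|$ is small — but since $\|u_b\|_{L^\infty(0,T;H^1)}$ is merely finite, not small, the correct way to make this absorption work is to keep $\delta$ fixed and instead use a Grönwall argument in $\tau$, or more simply to note that $B$ controls $|b(u)|$ well enough that the $\delta\int_\Omega B(u(\tau))$ term can be re-absorbed after bounding $\int_\Omega B(u(\tau))$ in terms of $\int_\Omega |\nabla u|$-type quantities is not available — so the clean route is: bound $\int_\Omega |b(u)|\,|u_b| $ in $L^2_t L^2_x$ fashion using $(H_2)$, which gives $|b(u)|^2 \le C(B(u)+1)$, then Cauchy–Schwarz and Young against $u_b$, $\partial_t u_b$ in $L^2$, producing $\int_0^\tau\int_\Omega B(u)$ on the right with a coefficient that Grönwall's inequality then handles.

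Assembling the pieces, after choosing the Young constants small enough to absorb $\varepsilon\|u\|_{L^2(0,T;H^1)}^2$ and $\varepsilon\int_\Omega B(u(\tau))$ into the left-hand side, I obtain for every $\tau$ an inequality of the form $\int_\Omega B(u(\tau)) + \int_0^\tau\|u\|_{H^1}^2 \le C\big(1 + \|\Psi(b_0)\|_{L^1} + \|f\|_{L^2(0,T;H^{-1})}^2 + \|u_b\|_{L^2(0,T;H^1)}^2 + \|\partial_t u_b\|_{L^2(0,T;L^2)}^2\big) + C\int_0^\tau\int_\Omega B(u)$. Grönwall's lemma in the variable $\tau$ then removes the last term at the cost of a constant depending on $T$, and taking the supremum over $\tau \in (0,T]$ yields the bound on $\|B(u)\|_{L^\infty(0,T;L^1)}$ and, with $\tau = T$, on $\|u\|_{L^2(0,T;H^1)}$; passing from squared norms to norms (using $a \le 1 + a^2$) gives exactly \eqref{apriori}. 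The main obstacle I anticipate is the bookkeeping for the time-derivative term: justifying the chain rule $\frac{d}{dt}\int_\Omega B(u) = \int_\Omega \partial_t b(u)\cdot u$ and the integration by parts against $u_b$ at the level of smooth solutions is straightforward, but one must be careful that all the boundary-in-time terms at $t=\tau$ carry the correct sign or are genuinely controlled (rather than merely "absorbed"), since $B(u(\tau))$ is the quantity we are trying to estimate and must appear with a good sign on the left — this is the one place where the Legendre-duality estimate of Lemma~\ref{bBlemma} (or hypothesis $(H_2)$) is essential and where a naive Young splitting would fail.
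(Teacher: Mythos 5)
Your proposal is correct and follows essentially the same route as the paper: testing with $u-u_b$, using the identity $\partial_t B(u)=\partial_t b(u)\cdot u$ to produce $\int_\Omega B(u(\tau))\,dx$ and $\|\Psi(b_0)\|_{L^1}$, integrating by parts in time on the $\partial_t b(u)\cdot u_b$ term, and controlling $b(u)$ via $(H_2)$ rather than the $L^\infty$-in-space route you rightly discard. The only cosmetic difference is that the paper bounds the resulting $B$-terms by $\epsilon_1\max_{t}\int_\Omega B(u(t))\,dx$ and absorbs after taking the supremum in $t$, where you invoke Gr\"onwall; both are valid.
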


\begin{proof}
Multiplying the first equation in \eqref{eq:rIBVP} by $u-u_b$ and integrating over $\Omega$,
gives
\begin{equation}\begin{array}{l}
\displaystyle \int_{\Omega}\partial_t b(u)\cdot (u-u_b)\, dx+\int_{\Omega} |\nabla  u|^2 \, dx =
 \int_{\Omega} \nabla  u \cdot\nabla  u_b\, dx + \langle  f, u-u_b\rangle \, .
 \end{array}
 \label{aprioriaux1}
 \end{equation}
On the other hand, from  \eqref{B} it follows that
\begin{equation}
\partial_{t}\, B(u)=\partial_{t}\, b(u)\cdot u+b\cdot \partial_t u -\nabla \phi\cdot \partial_t u = \partial_{t}\,b(u)\cdot u \, .
 \label{aprioriaux2}
 \end{equation}
Substituting \eqref{aprioriaux2} into \eqref{aprioriaux1}, yields
\begin{equation}\begin{array}{l}
\displaystyle  \int_{\Omega}\partial_t B(u)\, dx + \int_{\Omega} |\nabla u  |^2  \, dx =
\displaystyle   \int_{\Omega}\partial_t b(u)\cdot u_b\, dx + \int_{\Omega} \nabla  u \cdot\nabla  u_b\, dx
+ \langle  f, u-u_b\rangle \, .
\end{array}
 \label{aprioriaux3}
 \end{equation}
Integrating \eqref{aprioriaux3}  over $(0,t)$, recalling \eqref{Bdef} and using the initial condition $b(0,x)=b_0(x)$, we obtain
\begin{equation}\begin{array}{l}
\displaystyle  \int_{\Omega} B(u(t)) \, dx  + \int_{0}^{t} \int_{\Omega} |\nabla  u|^2 \, dx \, ds = \int_{\Omega} \Psi(b_0) \, dx  +
\int_{0}^{t}  \int_{\Omega}\partial_t b(u)\cdot u_b\, dx\, ds \\ \\
\displaystyle  \qquad \qquad + \int_{0}^{t}  \int_{\Omega} \nabla  u \cdot\nabla  u_b\, dx\, ds +
\int_{0}^{t} \langle f, u-u_b\rangle \, ds \, .
 \end{array}
 \label{aprioriaux4}
 \end{equation}

Using  H\"{o}lder's, Young's and
 Poincar\'e's inequalities,  one easily shows that
 \begin{equation}\begin{array}{l}
\displaystyle  \int_0^t\,  \int_{\Omega} \nabla  u \cdot\nabla  u_b\, dx\, ds +  \int_{0}^{t}  \langle f, u-u_b\rangle \,  ds \leq  \\
\displaystyle   \qquad \qquad C\, \Big( \|f\|_{L^2(0,T;H^{-1})}+ \|u_b\|_{L^2(0,T;H^{1})}\Big) +\epsilon\, \int_{0}^{t} \int_{\Omega} |\nabla u|^2  dx\, ds \, \, .
  \label{aprioriaux5}
\end{array}  \end{equation}
holds for all  $t\in [0,T]$ and for any $\epsilon>0$ and where $C=C(\Omega,\epsilon^{-1})$.
Moreover,  integration by parts gives
\[
 \int_{0}^{t} \int_{\Omega} \partial_t b(u) \cdot u_b\, dx\, ds = \int_{\Omega} [b(u(t))u_{b}(t)-b(u(0))u_{b}(0)]\, dx -
 \int_{0}^{t} \int_{\Omega} b(u)\cdot \partial_{t}u_b \, dx\, ds,
\]
from where, using again  H\"{o}lder's and  Young's inequalities as well as hypothesis $(H_2)$, it follows,
for all  $t\in [0,T]$ and for any $\epsilon_1>0$, that
\begin{equation}\begin{array}{l}
\displaystyle \int_{0}^{t} \int_{\Omega} \partial_t b(u) \cdot u_b\, dx\, ds \leq  C\, \Big(1+
\|\Psi(b_0)\|_{L^1} \\ \\
\displaystyle  \qquad +\, \|u_b\|_{L^\infty(0,T:L^2)} + \|\partial_t u_b\|_{L^2(0,T;L^2)} + \epsilon_1\,
\max_{t\in[0,T]}  \int_{\Omega} B(u(t)) \, dx \Big) \, ,
  \label{aprioriaux6}
 \end{array}
 \end{equation}
 where $C=C(\Omega, \epsilon_1^{-1}, T)$.
Using estimates \eqref{aprioriaux5} and \eqref{aprioriaux6}  in   \eqref{aprioriaux4} and   choosing $\epsilon>0$ small enough, we first obtain
\begin{equation}\begin{array}{l}
\displaystyle   \int_{\Omega} B(u(t)) \, dx  +   \int_{0}^{t} \int_{\Omega} |\nabla  u|^2 \, dx \, ds \leq C\, \Big(1+
\|\Psi(b_0)\|_{L^1} +\|f\|_{L^2(0,T;H^{-1})}
\\ \\
\displaystyle \qquad \qquad +\,  \|u_b\|_{L^2(0,T;H^{1})} + \|\partial_t u_b\|_{L^2(0,T;L^2)}  +
 \epsilon_1\,
\max_{t\in[0,T]}  \int_{\Omega} B(u(t)) \, dx \Big) \, .
\end{array}
  \label{aprioriaux7}
  \end{equation}
Finally, taking the supremum over $t\in [0,T]$ in \eqref{aprioriaux7}, choosing $\epsilon_1>0$ small enough and using the Poincar\'e's inequality for $u-u_{b}$, yields estimate \eqref{apriori}.
\end{proof}

The next lemmas concern the semi-discrete solution ${u}_h$ defined in \eqref{eq:def_interpolation}. We will first prove an a priori bound, uniform in $h>0$, similar to the one shown in Lemma \ref{smoothap}.


\begin{lem}\label{compact-x}  Assume that hypotheses $(H_1)$ and $(H_2)$ are valid and Assumption \ref{ubass} holds.
There exists $C=C(\Omega)$ such that for any $h>0$
\begin{equation}\begin{array}{l}
\displaystyle \sup\limits_{t\in [0,T]}\int_{\Omega}B(u_h(t))\, dx+\int_0^{T}\int_\Omega |\nabla u_h|^2 \, dx\, dt \leq
C\, \big(1+\|\Psi(b_0)\|_{L^1}
\\  \\
\displaystyle  \qquad \qquad +\, \|f\|_{L^{2}(0,T;H^{-1})} +
 \|u_b\|_{L^{2}(0,T;H^1)} + \|\partial_ t u_b\|_{L^{2}(0,T;L^2)})\, .
\end{array}
\label{eq:energy_h}
\end{equation}
\end{lem}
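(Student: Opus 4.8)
The plan is to mimic the proof of Lemma~\ref{smoothap}, replacing time derivatives by backward difference quotients, the chain rule \eqref{aprioriaux2} by the convexity inequality \eqref{B-ineq}, and the integration by parts in time by a discrete (Abel) summation by parts; this produces the bound \eqref{eq:energy_h} for $u_h$ with a constant independent of $h$. Throughout, $u_b^{(n)}$ denotes the zeroth-order Cl\'ement quasi-interpolant on $((n-1)h,nh]$ of the $H^1(\Omega;\R^m)$-extension of $u_b$ given by Assumption~\ref{ubass}, so that $\|u_b^{(n)}\|_{H^1}\le\|u_b\|_{L^\infty(0,T;H^1)}$ and, by the boundary conditions in \eqref{u1}--\eqref{un}, $u^{(n)}-u_b^{(n)}\in H^1_0(\Omega;\R^m)$; note also that since $u^{(n)}\in H^1(\Omega;\R^m)\hookrightarrow L^2(\Omega;\R^m)$, hypothesis $(H_1)$ and $\phi\ge 0$ give $0\le B(u^{(n)})=u^{(n)}\cdot b(u^{(n)})-\phi(u^{(n)})\le C(|u^{(n)}|^2+|u^{(n)}|)\in L^1(\Omega)$, so all the quantities below are finite.

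First I would test the $n$-th semi-discrete equation \eqref{un} (and \eqref{u1} for $n=1$, with $b(u^{(0)}):=b_0$) with $\xi=u^{(n)}-u_b^{(n)}$. For the discrete time term, \eqref{B-ineq} applied with $z=u^{(n-1)}$, $w=u^{(n)}$ gives $\big(b(u^{(n)})-b(u^{(n-1)})\big)\cdot u^{(n)}\ge B(u^{(n)})-B(u^{(n-1)})$, so after multiplying the tested identity by $h$,
\[
\int_\Omega\big(B(u^{(n)})-B(u^{(n-1)})\big)\,dx+h\int_\Omega|\nabla u^{(n)}|^2\,dx\ \le\ R_n,
\]
with $R_n=\int_\Omega\big(b(u^{(n)})-b(u^{(n-1)})\big)\cdot u_b^{(n)}\,dx+h\int_\Omega\nabla u^{(n)}\cdot\nabla u_b^{(n)}\,dx+h\,\langle f^{(n)},u^{(n)}-u_b^{(n)}\rangle$. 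Summing over $n=1,\dots,N$ for an arbitrary $N\le T/h$, the left-hand side telescopes to $\int_\Omega B(u^{(N)})\,dx-\int_\Omega\Psi(b_0)\,dx+\sum_{n=1}^Nh\int_\Omega|\nabla u^{(n)}|^2\,dx$ (using $b(u^{(0)})=b_0$ and \eqref{Bdef}, i.e.\ $B(u^{(0)})=\Psi(b_0)$), while a discrete summation by parts rewrites $\sum_{n=1}^N\int_\Omega(b(u^{(n)})-b(u^{(n-1)}))\cdot u_b^{(n)}\,dx$ as $\int_\Omega b(u^{(N)})\cdot u_b^{(N)}\,dx-\int_\Omega b_0\cdot u_b^{(1)}\,dx-\sum_{n=1}^{N-1}\int_\Omega b(u^{(n)})\cdot(u_b^{(n+1)}-u_b^{(n)})\,dx$.

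It then remains to bound, uniformly in $h$, the terms on the right. The delicate one --- the discrete counterpart of the integration by parts that produced \eqref{aprioriaux6} --- is $\sum_{n=1}^{N-1}\int_\Omega b(u^{(n)})\cdot(u_b^{(n+1)}-u_b^{(n)})\,dx$: using $(H_2)$ to get $|b(u^{(n)})|\le C(B(u^{(n)})^{1/2}+1)$, then Cauchy--Schwarz in $x$, then Cauchy--Schwarz in $n$ together with \eqref{jensend} of Lemma~\ref{whlemma} (which yields $\sum_n\|u_b^{(n+1)}-u_b^{(n)}\|_{L^2}^2\le h\|\partial_t u_b\|_{L^2(0,T;L^2)}^2$, hence $\sum_n\|u_b^{(n+1)}-u_b^{(n)}\|_{L^2}\le T^{1/2}\|\partial_t u_b\|_{L^2(0,T;L^2)}$), one controls it by $C\big(1+\sup_{m\le N}\int_\Omega B(u^{(m)})\,dx\big)^{1/2}\big(1+\|\partial_t u_b\|_{L^2(0,T;L^2)}\big)$ and then absorbs, via Young's inequality, into $\epsilon_1\sup_{m\le N}\int_\Omega B(u^{(m)})\,dx$ plus data terms. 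The two endpoint terms are easier: $(H_2)$ (which also gives $b_0\in L^2$, cf.\ Remark~\ref{explain-un}), Cauchy--Schwarz and Young bound $\int_\Omega b(u^{(N)})\cdot u_b^{(N)}\,dx$ by $\epsilon_1\int_\Omega B(u^{(N)})\,dx$ plus terms controlled by $\|u_b\|_{L^\infty(0,T;L^2)}$, and $\int_\Omega b_0\cdot u_b^{(1)}\,dx$ by terms controlled by $\|\Psi(b_0)\|_{L^1}$ and $\|u_b\|_{L^\infty(0,T;L^2)}$. Finally $\sum_{n=1}^Nh\int_\Omega\nabla u^{(n)}\cdot\nabla u_b^{(n)}\,dx+\sum_{n=1}^Nh\langle f^{(n)},u^{(n)}-u_b^{(n)}\rangle=\int_0^{Nh}\int_\Omega\nabla u_h\cdot\nabla u_{h,b}\,dx\,dt+\int_0^{Nh}\langle f_h,u_h-u_{h,b}\rangle\,dt$ is estimated exactly as in \eqref{aprioriaux5}, by Cauchy--Schwarz, Poincar\'e (for $u_h-u_{h,b}\in H^1_0$) and Young, using the first parts of Lemma~\ref{whlemma} to replace the Cl\'ement interpolants $u_{h,b}$ and $f_h$ by $u_b$ and $f$; this contributes $\epsilon\sum_{n=1}^Nh\int_\Omega|\nabla u^{(n)}|^2\,dx$.

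Collecting the estimates, for every $N\le T/h$ the quantity $\int_\Omega B(u^{(N)})\,dx+(1-\epsilon)\sum_{n=1}^Nh\int_\Omega|\nabla u^{(n)}|^2\,dx$ is bounded by the right-hand side of \eqref{eq:energy_h} plus $3\epsilon_1\sup_{m\le T/h}\int_\Omega B(u^{(m)})\,dx$; since this is uniform in $N$, I would take $\sup_N$, fix $\epsilon,\epsilon_1$ small to absorb the last term on the left, and conclude using that $u_h$ is piecewise constant with values $u^{(n)}$ (so $\sup_{t\in[0,T]}\int_\Omega B(u_h(t))\,dx=\max_n\int_\Omega B(u^{(n)})\,dx$ and $\int_0^T\int_\Omega|\nabla u_h|^2\,dx\,dt=\sum_{n=1}^{T/h}h\int_\Omega|\nabla u^{(n)}|^2\,dx$). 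I expect the main obstacle to be exactly this summation-by-parts term: obtaining its bound, and the absorption it entails, with a constant independent of $h$ (no hidden factor $1/h$); the rest is a routine transcription of the proof of Lemma~\ref{smoothap}.
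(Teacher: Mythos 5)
Your proposal is correct and follows essentially the same route as the paper: test with $u^{(n)}-u_b^{(n)}$, use the convexity inequality \eqref{B-ineq} in place of the chain rule, telescope, and handle the boundary-data term by discrete summation by parts together with Lemma~\ref{whlemma}. The only (harmless) variation is that you absorb the summation-by-parts remainder into $\sup_m\int_\Omega B(u^{(m)})\,dx$ via $(H_2)$ — exactly as in the continuous estimate \eqref{aprioriaux6} — whereas the paper's discrete proof routes that term through $(H_1)$ and Poincar\'e into the Dirichlet term; both work, and your explicit remark that $\int_\Omega B(u^{(n)})\,dx<\infty$ a priori (so the absorption is legitimate) is a point the paper leaves implicit.
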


\begin{proof}
Given any $n\in \{1,...,T/h\}$, consider  the difference $\xi=u^{(n)}-u_b^{(n)}\in H^1_0(\Omega)$ as a test function    in \eqref{un} (or in \eqref{u1}). In view of
\eqref{B-ineq}, we obtain
for all $n\geq 1$
\[\begin{array}{l}
\displaystyle \int_\Omega \big(B(u^{(n)})-B(u^{(n-1)})\big)\, dx+h\int_{\Omega}|\nabla u^{(n)}|^2 \, dx\leq\\ \\
\displaystyle \quad  h\int_\Omega \nabla u^{(n)}\cdot \nabla u^{(n)}_b\, dx+h\left\langle f^{(n)},u^{(n)}-u^{(n)}_b\right\rangle
  +\int_{\Omega}\left(b(u^{(n)})-b(u^{(n-1)})\right)\cdot u_b^{(n)}\, dx.$$
\end{array}
\]

\noindent Using Poincar\'e's  and Young's inequalities,    we immediately see that

\begin{equation}\begin{array}{l}
\displaystyle  \int_\Omega \big(B(u^{(n)})-B(u^{(n-1)})\big)\, dx + \frac{h}{2}\, \int_{\Omega}|\nabla u^{(n)}|^2 \, dx\leq \\  \\
\displaystyle  \qquad 	 C h\left(\|\nabla u^{(n)}_b\|^2_{L^2}+\|f^{(n)}\|^2_{H^{-1}}\right)
	+\int_{\Omega}\left(b(u^{(n)})-b(u^{(n-1)})\right)\cdot u_b^{(n)}\, dx \, .
\end{array}
\label{lina}
\end{equation}
Adding inequalities
 \eqref{lina} from $n=1$ to $n=N$, letting $t=Nh$, with $N\in \{1,...,T/h\}$, and  recalling that $u_h$ is piecewise constant  in time, gives
\begin{equation}\begin{array}{l}
\displaystyle  \int_\Omega  B(u_h(t))\, dx+\frac{1}{2}\, \int_0^t \int_{\Omega}|\nabla u_h(t)|^2 \, dx\leq \int_\Omega  B(u_0)\, dx +
	\\ \\
\displaystyle \qquad  C\, \left(\|\nabla u_b\|^2_{L^2(0,T;L^2)}+\|f\|^2_{L(0,T;H^{-1})}\right)   +\sum_{n=1}^{N}\int_{\Omega}\left(b(u^{(n)})-b(u^{(n-1)})\right)\cdot u_b^{(n)}\, dx\, .
\end{array}
\label{lina1}
\end{equation}
where we have also taken Lemma \ref{whlemma} into account.

Since  for any pair of sequences $\{x_n\}_{n\in \N}$ and $\{y_n\}_{n\in \N}$ the ``summation by parts'' formula
\[
\sum\limits_{n=1}^{N}(x^{n}-x^{n-1})y^{n}=(x^{N}y^{N}-x^{0}y^{0})-h\sum\limits_{n=1}^{N}x^{n-1}\left(\frac{y^{n}-y^{n-1}}{h}\right)
\]
holds, we can write
\[
\begin{array}{l}
\displaystyle  S:= \sum_{n=1}^{N}\int_{\Omega}\left(b(u^{(n)})-b(u^{(n-1)})\right)\cdot u_b^{(n)}\, dx =  \int_{\Omega}b(u_h(t))\cdot u_{h,b}(t)\, dx \, -
 \\ \\
 \displaystyle \qquad \qquad \int_{\Omega}b(u_0)\cdot u_{h,b}(0)\, dx
-\int\limits_{0}^{t}\int\limits_{\Omega}b(u_h(s-h,x))\partial_t^h u_{h,b}(s,x)\, dx\, ds.
\end{array}
\]
Hence, using H\"{o}lder's  and Young's inequalities, Lemma \ref{whlemma} and the fact that $u_h$ is piecewise constant  in time, we get
\begin{equation}
\begin{array}{l}
\displaystyle |S|   \leq  \eps \, \left(\|b(u_h(t))\|^2_{L^2} +\|b(u_h)\|^2_{L^2(-h,t:L^2)}\right) +   \\ \\
 \displaystyle \qquad \qquad  C\left(\|u_{b}\|^2_{L^\infty(0,T;L^2)}
 +\|\partial_t^h u_{b}\|^2_{L^2(0,T;L^2)}\right)
 + \int_{\Omega}|b(u_0)|^2 dx \, .
 \end{array}
  \label{spcomp1}
\end{equation}
Moreover, in view of hypotheses $(H_1)$ and $(H_2)$
\begin{equation}
\|b(u_h(t))\|^2_{L^2}\leq C\left(1+\int_{\Omega}B(u_h(t))\, dx\right)
 \label{spcomp2}
\end{equation}
and
\begin{equation}
\begin{array}{l}
\|b(u_h)\|^2_{L^2((-h,t)\times\Omega)}
 =h\|b(u_0)\|_{L^2}^2+\|b(u_h)\|^2_{L^2(0,t,L^2)}\\ \\
\qquad 	\leq C\left(1+\|B(u_0)\|_{L^1}+\|u_h\|^2_{L^2(0,t;L^2)}\right)\\ \\
\qquad \leq  C \left(1+\|\Psi(u_0)\|_{L^1} + \|u_b\|^2_{L^2(0,T;H^1)}+\|\nabla u_h\|^2_{L^2(0,t;L^2)}\right) \, ,
 \end{array}
 \label{spcomp3}
\end{equation}
where we have also use Poincar\'e's inequality for $u_h-u_{h,b}\in H^1_0(\Omega)$ and Lemma \ref{whlemma} .

Collecting estimates \eqref{spcomp1},  \eqref{spcomp2},  \eqref{spcomp3} and using them in  \eqref{lina1} and choosing $\epsilon>0$ small enough, gives finally
\[\begin{array}{l}
\displaystyle \int_{\Omega}B(u_h(t))\, dx+\int_0^{t}\int_\Omega |\nabla u_h|^2 \, dx\, dt \leq
C\, \big(1+\|\Psi(b_0)\|_{L^1}
\\  \\
\displaystyle  \qquad \qquad +\, \|f\|_{L^{2}(0,T;H^{-1})} +
 \|u_b\|_{L^{2}(0,T;H^1)} + \|\partial_ t u_b\|_{L^{2}(0,T;L^2)})\, ,
\end{array}
\]
with some constant $C=C(\Omega)>0$, independent of $h$, from which estimate \eqref{eq:energy_h} readily follows.
\end{proof}

 \begin{rmk}\label{bd-1}
Using Poincar\'e's inequality for and Lemma \ref{whlemma}, we also obtain
\[
\|u_{h}\|_{L^{2}(0,T; H^1)}\leq C\, \big(1+\|\Psi(b_0)\|_{L^1}
+\, \|f\|_{L^{2}(0,T;H^{-1})} +
 \|u_b\|_{L^{2}(0,T;H^1)} + \|\partial_ t u_b\|_{L^{2}(0,T;L^2)})\,  .\]
\end{rmk}

The following lemma is a straightforward modification of Lemma 1.9 presented in \cite{AL83}. It will be essential in proving the main existence  theorem. Its proof, which we omit, is based on Minty's monotonicity argument and makes use of estimate \eqref{Bdef-TL-estimate} of Lemma \ref{bBlemma}.
\begin{lem}\label{lemma-1.9}
Assume that  $\{u_h\}$ converges weakly in $L^{2}(0,T; H^{1}(\Omega))$ to some
 $u\in L^{2}(0,T; H^{1}(\Omega))$ and let $\tau\in (0,T)$. Moreover, assume that there exists a constant $C>0$ such that
 $$\int\limits_{0}^{T-\tau}\int\limits_{\Omega}\big(b(u_h(s+\tau))-b(u_h(s))\big)\cdot \big(u_h(s+\tau)-u_h(s)\big)\, dx\, ds\leq C\, \max\{\tau,\sqrt{\tau}\}$$
  and that
  \[
  \int_{\Omega} B(u_{h}(t))\, dx\leq C\, , \quad \text{for all}\,\,\, 0<t<T\, .
  \]
 Then $b(u_h)$ converges to $b(u)$ in $L^{1}(0,T;L^1(\Omega))$ and $B(u_{h})$ converges to $B(u)$ almost everywhere.
\end{lem}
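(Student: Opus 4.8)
The plan is to follow the strategy of \cite[Lemma~1.9]{AL83}: the equi-integrability furnished by \eqref{Bdef-TL-estimate} yields a weak limit of $\{b(u_h)\}$; Minty's monotonicity trick identifies it with $b(u)$, the time-translate estimate being exactly what allows the ``energy'' to pass to the limit; and the strict convexity of the conjugate potential promotes weak to strong convergence. Write $Q=(0,T)\times\Omega$. By Lemma~\ref{bBlemma}, for every $\delta>0$ and every measurable $E\subset Q$,
\[
\int_E|b(u_h)|\,dx\,dt\ \le\ \delta\int_E B(u_h)\,dx\,dt+|E|\max_{|w|\le 1/\delta}|b(w)|\ \le\ \delta\,CT+|E|\max_{|w|\le 1/\delta}|b(w)| ,
\]
where we used $\int_\Omega B(u_h(t))\,dx\le C$; choosing first $\delta$ and then $|E|$ small shows that $\{b(u_h)\}$ is equi-integrable on $Q$ (and bounded in $L^1(Q)$; by $(H_1)$ even in $L^2(Q)$). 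Hence, along a subsequence, $b(u_h)\rightharpoonup\chi$ weakly in $L^2(Q)$ for some $\chi$.

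Since $b=\nabla\phi$ is monotone, $0\le\int_Q(b(u_h)-b(w))\cdot(u_h-w)\,dx\,dt$ for every $w\in L^2(0,T;H^1(\Omega;\R^m))$. Expanding, all cross terms pass to the limit as $h\to0$ (using $u_h\rightharpoonup u$ and $b(u_h)\rightharpoonup\chi$ in $L^2(Q)$, and $b(w)\in L^2(Q)$ by $(H_1)$); the delicate term is $\int_Q b(u_h)\cdot u_h$, for which it suffices to establish the \emph{energy inequality}
\[
\limsup_{h\to0}\int_Q b(u_h)\cdot u_h\,dx\,dt\ \le\ \int_Q\chi\cdot u\,dx\,dt .
\]
Granting it, the monotonicity inequality becomes $\int_Q(\chi-b(w))\cdot(u-w)\,dx\,dt\ge0$ for all such $w$; taking $w=u+\lambda v$ with $v\in C_c^\infty(Q;\R^m)$, dividing by $\lambda>0$ and letting $\lambda\downarrow0$ (with $(H_1)$ supplying the domination for $b(u+\lambda v)\to b(u)$ in $L^2$) gives $\chi=b(u)$ a.e.\ in $Q$.

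The energy inequality is the heart of the argument and the only place the time-translate estimate is used essentially: it supplies the compactness in time that the degeneracy of $b$ conceals. Fix $\sigma\in(0,T)$ and introduce the Steklov averages $u_h^\sigma(t)=\tfrac1\sigma\int_0^\sigma u_h(t+s)\,ds$, $\beta_h^\sigma(t)=\tfrac1\sigma\int_0^\sigma b(u_h)(t+s)\,ds$ (extending $u_h$ beyond $T$, or discarding the strip $(T-\sigma,T)$ of measure $O(\sigma)$). Averaging the time-translate estimate over shifts $s\in(0,\sigma)$ and using $\tfrac1\sigma\int_0^\sigma C\max\{s,\sqrt s\}\,ds\le C\sqrt\sigma$, the non-negative quantity $\tfrac1\sigma\int_0^\sigma\!\int_Q(b(u_h)(t+s)-b(u_h)(t))\cdot(u_h(t+s)-u_h(t))\,dx\,dt\,ds$ is bounded by some $\omega(\sigma)\to0$ uniformly in $h$; expanding it rewrites $\int_Q b(u_h)\cdot u_h$ in terms of $\beta_h^\sigma$, $u_h^\sigma$, plus time-boundary terms that are $o(1)$ as $\sigma\to0$ uniformly in $h$ (because $b(u_h)\cdot u_h=B(u_h)+\phi(u_h)\ge0$ is, by $\int_\Omega B(u_h(t))\,dx\le C$ and $(H_1)$, bounded in $L^\infty(0,T;L^1(\Omega))$). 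One then sends $h\to0$ at fixed $\sigma$: $u_h^\sigma\to u^\sigma$ strongly in $L^2(Q)$ — since $\partial_t u_h^\sigma=\tfrac1\sigma(u_h(\cdot+\sigma)-u_h(\cdot))$ is bounded in $L^2(Q)$ and $u_h^\sigma$ is bounded in $L^2(0,T;H^1)$, so Aubin--Lions applies at fixed $\sigma$ — while $\beta_h^\sigma\rightharpoonup\chi^\sigma$ weakly in $L^2(Q)$; finally one lets $\sigma\to0$, with $\chi^\sigma\to\chi$ and $u^\sigma\to u$ in $L^2(Q)$, to conclude. The careful handling of the Steklov time-boundary terms and of the order of the limits ($h\to0$ then $\sigma\to0$) is the main obstacle; it is exactly the computation of \cite[Lemma~1.9]{AL83}, carried over here with $\max\{\tau,\sqrt\tau\}$ in place of a generic time-modulus.

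To finish, run the energy inequality ``from below''. By weak lower semicontinuity of $g\mapsto\int_Q\phi(g)$ along $u_h\rightharpoonup u$ in $L^2(Q)$ and of $g\mapsto\int_Q\Psi(g)$ along $b(u_h)\rightharpoonup b(u)$ in $L^2(Q)$, together with $b(z)\cdot z=B(z)+\phi(z)$ and $B=\Psi\circ b$ (cf.\ \eqref{Bdef}--\eqref{B}), the resulting convergence $\int_Q b(u_h)\cdot u_h\to\int_Q b(u)\cdot u$ forces both $\int_Q\phi(u_h)\to\int_Q\phi(u)$ and $\int_Q B(u_h)\,dx\,dt=\int_Q\Psi(b(u_h))\,dx\,dt\to\int_Q\Psi(b(u))\,dx\,dt=\int_Q B(u)\,dx\,dt$. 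Since $\phi\in C^1$, its Legendre conjugate $\Psi$ is strictly convex, so $b(u_h)\rightharpoonup b(u)$ in $L^2(Q)$ together with $\int_Q\Psi(b(u_h))\to\int_Q\Psi(b(u))<\infty$ implies, by Visintin's strong-convergence theorem, that $b(u_h)\to b(u)$ in measure on $Q$; with the equi-integrability of $\{b(u_h)\}$ and Vitali's theorem this upgrades to $b(u_h)\to b(u)$ in $L^1(0,T;L^1(\Omega))$, while continuity of $\Psi$ gives $B(u_h)=\Psi(b(u_h))\to\Psi(b(u))=B(u)$ in measure, hence a.e.\ along a further subsequence. (When $b$ is strongly monotone, as is the isotherm in our model, the time-translate estimate directly bounds $\int_0^{T-\tau}\!\int_\Omega|u_h(t+\tau)-u_h(t)|^2\,dx\,dt$, so $\{u_h\}$ is relatively compact in $L^2(Q)$ by Simon's lemma and the Minty step can be bypassed: $u_h\to u$ a.e., whence $b(u_h)\to b(u)$ and $B(u_h)\to B(u)$ a.e.\ by continuity.)
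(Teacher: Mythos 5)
The paper does not actually write out a proof of this lemma: it defers entirely to \cite[Lemma~1.9]{AL83}, saying only that the argument is Minty's monotonicity trick combined with estimate \eqref{Bdef-TL-estimate} of Lemma~\ref{bBlemma} --- which is precisely the route you reconstruct (equi-integrability of $\{b(u_h)\}$ from Lemma~\ref{bBlemma}, Steklov averaging of the time-translate estimate to get the energy inequality, Minty to identify the weak limit as $b(u)$, then Fenchel equality, strict convexity of $\Psi$ and Vitali to upgrade to strong $L^1$ convergence and a.e.\ convergence of $B(u_h)$). The one imprecise spot is your claim that $b(u_h)\cdot u_h=B(u_h)+\phi(u_h)$ is bounded in $L^{\infty}(0,T;L^1(\Omega))$: under $(H_1)$--$(H_2)$ and $\sup_t\int_\Omega B(u_h(t))\,dx\leq C$ alone one only gets $\int_\Omega \phi(u_h(t))\,dx\leq C(1+\|u_h(t)\|_{L^2(\Omega)})$, hence an $L^2(0,T)$ (not $L^\infty(0,T)$) bound on $t\mapsto\int_\Omega b(u_h(t))\cdot u_h(t)\,dx$; this is still enough to make the Steklov time-boundary strips of width $\sigma$ contribute $O(\sqrt{\sigma})$ uniformly in $h$ by Cauchy--Schwarz in time, so the argument survives, but the justification as written should be corrected (for the model isotherm $B(z)\geq\tfrac12|z|^2$ and your stronger claim does hold).
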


Lemma \ref{compact-x}  guarantees that $u_h$ satisfies the second assumption of  Lemma \ref{lemma-1.9}.
The first assumption is established in the next two lemmas.

 \begin{lem}\label{635}
Let $h>0$   and  $t_1, t_2\in (0,T)$ be given. Then for all $\xi\in H^1_0(\Omega)$   it holds
\begin{equation*}\begin{array}{l}
\displaystyle  \Bigg|\int\limits_{\Omega}[b(u_h(t_2))-b(u_h(t_1)]\cdot \xi\, dx +\int\limits_{t_1}^{t_2}\left(\int_{\Omega}\nabla u_h(s)\cdot\nabla\xi \, dx
 -\left\langle f_h(s),\xi\right\rangle\right) \,ds \Bigg| \\ \\
\displaystyle  \qquad \qquad \leq  \, \sqrt{h}\, \big( \|\nabla u_h\|_{L^2(0,T;L^2)} + \|f\|_{L^2(0,T;H^{-1})}\big)\, \|\nabla \xi\|_{L^2} \, .
\end{array}
\end{equation*}
\end{lem}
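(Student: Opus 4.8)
The plan is to integrate the semi-discrete equation \eqref{30} over the time interval $(t_1,t_2)$ and carefully track the error caused by the fact that $u_h$ and $f_h$ are piecewise constant and the difference quotient $\partial_t^h b(u_h)$ telescopes only on a grid. Concretely, I would first integrate \eqref{30} tested against a fixed $\xi\in H^1_0(\Omega)$ over $s\in(t_1,t_2)$, obtaining
\[
\int_{t_1}^{t_2}\int_\Omega \partial_t^h b(u_h(s))\cdot\xi\,dx\,ds
+\int_{t_1}^{t_2}\Big(\int_\Omega \nabla u_h(s)\cdot\nabla\xi\,dx-\langle f_h(s),\xi\rangle\Big)ds=0.
\]
The second and third terms are already exactly the ones appearing inside the absolute value in the statement, so the whole task reduces to comparing the first term with $\int_\Omega[b(u_h(t_2))-b(u_h(t_1))]\cdot\xi\,dx$.

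Next I would compute $\int_{t_1}^{t_2}\partial_t^h b(u_h(s))\,ds$ explicitly. Since $b(u_h(s))=b(u^{(n)})$ on $((n-1)h,nh]$ and $\partial_t^h b(u_h(s))=\frac{b(u^{(n)})-b(u^{(n-1)})}{h}$ there, the integral is a sum of terms of the form (length of overlap of $(t_1,t_2)$ with $((n-1)h,nh]$) times $\frac{b(u^{(n)})-b(u^{(n-1)})}{h}$. If $t_1,t_2$ were grid points this telescopes cleanly to $b(u_h(t_2))-b(u_h(t_1))$; in general the endpoints fall inside grid cells, and the discrepancy is localized to the (at most two) partial cells containing $t_1$ and $t_2$. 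In each such partial cell the error contributes a term of size at most $\frac{1}{h}\cdot(\text{cell fraction}\cdot h)\cdot|b(u^{(n)})-b(u^{(n-1)})|\le |b(u^{(n)})-b(u^{(n-1)})|$ paired against $\xi$. The cleanest way to package this: write $t_i=(N_i-1+\theta_i)h$ and show
\[
\int_{t_1}^{t_2}\partial_t^h b(u_h(s))\cdot\xi\,ds-\int_\Omega[b(u_h(t_2))-b(u_h(t_1))]\cdot\xi\,dx
=-\theta_2\!\int_\Omega\!\big(b(u^{(N_2)})-b(u^{(N_2-1)})\big)\!\cdot\xi\,dx+\theta_1\!\int_\Omega\!\big(b(u^{(N_1)})-b(u^{(N_1-1)})\big)\!\cdot\xi\,dx,
\]
with $0\le\theta_i\le1$. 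Now for each single-cell increment $b(u^{(n)})-b(u^{(n-1)})$ I would use \eqref{30} itself (equivalently \eqref{un}) on that one cell with the \emph{same} test function $\xi$: multiplying \eqref{un} by $h$ gives $\int_\Omega(b(u^{(n)})-b(u^{(n-1)}))\cdot\xi\,dx=h\big(\langle f^{(n)},\xi\rangle-\int_\Omega\nabla u^{(n)}\cdot\nabla\xi\,dx\big)$, so each such term is bounded by $h\,(\|f^{(n)}\|_{H^{-1}}+\|\nabla u^{(n)}\|_{L^2})\,\|\nabla\xi\|_{L^2}$.

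Finally I would collect the error, which is at most $h\big(\|f^{(N_1)}\|_{H^{-1}}+\|f^{(N_2)}\|_{H^{-1}}+\|\nabla u^{(N_1)}\|_{L^2}+\|\nabla u^{(N_2)}\|_{L^2}\big)\|\nabla\xi\|_{L^2}$, and bound the four cellwise quantities in terms of the full $L^2(0,T;\cdot)$ norms: since e.g. $\|\nabla u^{(N)}\|_{L^2}^2=\frac{1}{h}\int_{(N-1)h}^{Nh}\|\nabla u_h(s)\|_{L^2}^2\,ds\le\frac1h\|\nabla u_h\|_{L^2(0,T;L^2)}^2$, one gets $h\|\nabla u^{(N)}\|_{L^2}\le\sqrt h\,\|\nabla u_h\|_{L^2(0,T;L^2)}$, and similarly $h\|f^{(N)}\|_{H^{-1}}\le\sqrt h\,\|f_h\|_{L^2(0,T;H^{-1})}\le\sqrt h\,\|f\|_{L^2(0,T;H^{-1})}$ by Lemma~\ref{whlemma}. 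Summing the (at most two of each) contributions and absorbing the factor $2$ into the generic constant yields the claimed bound $\sqrt h\,(\|\nabla u_h\|_{L^2(0,T;L^2)}+\|f\|_{L^2(0,T;H^{-1})})\|\nabla\xi\|_{L^2}$. The only mildly delicate point is the bookkeeping of the partial boundary cells and making sure the coefficients $\theta_i$ are genuinely in $[0,1]$ so that no hidden $1/h$ survives; everything else is Cauchy–Schwarz plus the Jensen-type estimates of Lemma~\ref{whlemma}.
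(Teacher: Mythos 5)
Your argument is correct and is essentially the paper's proof in different bookkeeping: the paper telescopes the discrete equations from $n=N_1+1$ to $N_2$ and books the error as the elliptic/source terms integrated over the partial cells $(t_i,N_ih]$, whereas you book it as the non-telescoping fractions of $b(u^{(N_i)})-b(u^{(N_i-1)})$ and then convert back via the single-cell equation — and since $(1-\theta_i)h=N_ih-t_i$ and the integrands are piecewise constant, the two error terms coincide exactly, and both are bounded by the same Cauchy--Schwarz/Jensen estimate over an interval of length at most $h$, giving the $\sqrt h$ factor. (Your minor sign slip, $\theta_i$ versus $1-\theta_i$, and the factor $2$ you absorb are immaterial; the paper is equally cavalier about the latter.)
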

\begin{proof}
From  \eqref{un}-\eqref{u1} it follows that
\begin{equation}
\int\limits_{\Omega}\left[b\left(u^{(n)}\right)-b\left(u^{(n-1)}\right)\right]\cdot\xi\, dx+h\int\limits_{\Omega}\nabla u^{(n)}\cdot\nabla\xi\, dx=h
\left\langle f^{(n)},\xi\right\rangle.\label{521}
\end{equation}
for all $n\geq 1$. Let $N_1, N_2\in \{1,...,T/h\}$ be such that $t_1\in ((N_1-1)h,N_1 h]$ and $t_2\in ((N_2-1)h,N_2 h]$. Without loss of generality assume that $N_1<N_2.$
 Summing from $N_1+1$ to $N_2$ in \eqref{521}, gives
\[
\sum_{n=N_1+1}^{N_2}\int\limits_{\Omega}\left[b\left(u^{(n)}\right)-b\left(u^{(n-1)}\right)\right]\cdot\xi\, dx +
h \sum_{n=N_1+1}^{N_2}\int\limits_{\Omega}\nabla u^{(n)}\cdot\nabla\xi\, dx = h \sum_{n=N_1+1}^{N_2}
\left\langle f^{n},\xi\right\rangle,
\]
In view of the definition of $u_h$  and $f_h$ and observing that $u_h(t_i)=u^{(N_i)}$, $i=1,2$,  we obtain
\begin{equation}\begin{array}{rl}\label{644}
\displaystyle  \int\limits_{\Omega}[b(u_h(t_2))-b(u_h(t_1))\cdot \xi \, dx
   & \displaystyle  = \int\limits_{t_1}^{t_2}\left(\int\limits_{\Omega}-\nabla u_h(s)\cdot\nabla\xi \, dx + \left\langle f_h(s),\xi\right\rangle\right)\, ds \\ \\
  & \displaystyle -\int\limits_{t_1}^{N_1 h}\left(\int\limits_{\Omega}-\nabla u_h(s)\cdot\nabla\xi \, dx + \left\langle f_h(s),\xi\right\rangle\right)\, ds
  \\ \\
& \displaystyle  +\int\limits_{t_2}^{N_2 h}\left(\int\limits_{\Omega}-\nabla u_h(s)\cdot\nabla\xi \, dx + \left\langle f_h(s),\xi\right\rangle\right)\, ds.
\end{array}
\end{equation}
 Using H\"{o}lder's inequality and Lemma \ref{whlemma},  we can estimate
  \begin{multline}
  \label{645}
 \left|\int\limits_{t_i}^{N_i h}\left(\int\limits_{\Omega}-\nabla u_h(s)\cdot\nabla\xi \, dx + \left\langle f_h(s),\xi\right\rangle\right)\, ds\right| \\
= \int\limits_{t_i}^{N_i h}   \|\nabla u_h(s)\|_{L^2}\,\|\nabla\xi\|_{L^2} \, ds + \int\limits_{t_i}^{N_i h}
\|f_h(s)\|_{H^{-1}} \|\nabla \xi\|_{L^2} \, ds\\
\leq  \, \sqrt{h}\, \big( \|\nabla u_h\|_{L^2(0,T;L^2)} + \|f\|_{L^2(0,T;H^{-1})}\big)\, \|\nabla \xi\|_{L^2} \, , \quad i=1,2\, ,   
  \end{multline}
and the result follows from \eqref{644} and \eqref{645}.
\end{proof}

\begin{lem}\label{compact-t}

Given $\tau\in (0,T)$ there exists a constant $C=C(\Omega, T, f, u_0,u_b,\tau)$ such that for all $h>0$ it holds
\begin{equation*}
 \int\limits_{0}^{T-\tau}\int\limits_{\Omega}\left[b(u_h(t+\tau))-b(u_h(t))\right]\cdot \left[u_h(t+\tau)-u_h(t)\right]\, dx\, dt\leq  C\, \max\{\tau,\sqrt{\tau}\} \, .
\end{equation*}
 \label{prop:compactness_time}
\end{lem}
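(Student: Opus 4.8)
The plan is to exploit the monotonicity of $b=\nabla\phi$ by testing the semi-discrete equation against the time-translated increment of $u_h$, and then to recycle the uniform-in-$h$ bounds already established. Fix $\tau\in(0,T)$. For a.e.\ $t\in(0,T-\tau)$ write
\[
u_h(t+\tau)-u_h(t)=\xi_t+\beta_t,\qquad \beta_t:=u_{h,b}(t+\tau)-u_{h,b}(t),\qquad \xi_t:=u_h(t+\tau)-u_h(t)-\beta_t,
\]
where $u_{h,b}$ is the zeroth-order Cl\'ement interpolant of the extension of $u_b$ furnished by Assumption~\ref{ubass}. Since $u_h-u_{h,b}\in H^1_0(\Omega;\R^m)$ (as used in the proof of Lemma~\ref{compact-x}), also $\xi_t\in H^1_0(\Omega;\R^m)$, and the integrand in the statement splits as $A(t)+D(t)$, with $A(t):=\int_\Omega[b(u_h(t+\tau))-b(u_h(t))]\cdot\xi_t\,dx$ and $D(t):=\int_\Omega[b(u_h(t+\tau))-b(u_h(t))]\cdot\beta_t\,dx$. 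I would bound $\int_0^{T-\tau}A(t)\,dt$ and $\int_0^{T-\tau}D(t)\,dt$ separately.

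For the first, apply Lemma~\ref{635} with $t_1=t$, $t_2=t+\tau$ and test function $\xi=\xi_t$, which gives $A(t)=-\int_t^{t+\tau}\big(\int_\Omega\nabla u_h(s)\cdot\nabla\xi_t\,dx-\langle f_h(s),\xi_t\rangle\big)\,ds+R(t)$ with $|R(t)|\le\sqrt h\,\big(\|\nabla u_h\|_{L^2(0,T;L^2)}+\|f\|_{L^2(0,T;H^{-1})}\big)\,\|\nabla\xi_t\|_{L^2}$. Substituting $s=t+\sigma$, integrating over $t\in(0,T-\tau)$, exchanging the $t$- and $\sigma$-integrals by Fubini, and using the Cauchy--Schwarz inequality in space-time (together with the Poincar\'e inequality to control $\|\xi_t\|_{H^1_0}$ by $\|\nabla\xi_t\|_{L^2}$), one obtains $\big|\int_0^{T-\tau}A(t)\,dt\big|\le\big[\tau\big(\|\nabla u_h\|_{L^2(0,T;L^2)}+\|f_h\|_{L^2(0,T;H^{-1})}\big)+\sqrt{hT}\,\big(\|\nabla u_h\|_{L^2(0,T;L^2)}+\|f\|_{L^2(0,T;H^{-1})}\big)\big]\,\|\nabla\xi_\cdot\|_{L^2(0,T-\tau;L^2)}$. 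Here $\|\nabla u_h\|_{L^2(0,T;L^2)}$ is bounded uniformly in $h$ by Lemma~\ref{compact-x} (see also Remark~\ref{bd-1}), $\|f_h\|_{L^2(0,T;H^{-1})}\le\|f\|_{L^2(0,T;H^{-1})}$ and $\|\nabla u_{h,b}\|_{L^2(0,T;L^2)}\le\|\nabla u_b\|_{L^2(0,T;L^2)}$ by Lemma~\ref{whlemma}, and since $\|\nabla\xi_t\|_{L^2}\le\|\nabla u_h(t+\tau)\|_{L^2}+\|\nabla u_h(t)\|_{L^2}+\|\nabla u_{h,b}(t+\tau)\|_{L^2}+\|\nabla u_{h,b}(t)\|_{L^2}$, Assumption~\ref{ubass} yields $\|\nabla\xi_\cdot\|_{L^2(0,T-\tau;L^2)}\le C$ uniformly in $h$. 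Hence $\big|\int_0^{T-\tau}A(t)\,dt\big|\le C(\tau+\sqrt h)$ with $C$ independent of $h$.

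For the second term, Cauchy--Schwarz in space-time gives $\big|\int_0^{T-\tau}D(t)\,dt\big|\le 2\,\|b(u_h)\|_{L^2(0,T;L^2)}\,\big\|u_{h,b}(\cdot+\tau)-u_{h,b}(\cdot)\big\|_{L^2(0,T-\tau;L^2)}$. The first factor is bounded uniformly in $h$: by $(H_2)$ and Lemma~\ref{compact-x}, $\int_\Omega|b(u_h(t))|^2\,dx\le C\big(1+\int_\Omega B(u_h(t))\,dx\big)\le C$ for all $t$. For the second factor, Assumption~\ref{ubass} gives $u_b\in C([0,T];L^2(\Omega;\R^m))$ with $\|u_b(\cdot+\tau)-u_b(\cdot)\|_{L^2(0,T-\tau;L^2)}\le\tau\,\|\partial_t u_b\|_{L^2(0,T;L^2)}$, while the elementary estimate $\|u_{h,b}-u_b\|_{L^2(0,T;L^2)}\le C\sqrt h\,\|\partial_t u_b\|_{L^2(0,T;L^2)}$ for the zeroth-order Cl\'ement interpolant (a consequence of $\partial_t u_b\in L^2$) implies $\|u_{h,b}(\cdot+\tau)-u_{h,b}(\cdot)\|_{L^2(0,T-\tau;L^2)}\le C(\tau+\sqrt h)\,\|\partial_t u_b\|_{L^2(0,T;L^2)}$, so $\big|\int_0^{T-\tau}D(t)\,dt\big|\le C(\tau+\sqrt h)$ uniformly in $h$. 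Adding the two bounds, $\int_0^{T-\tau}\int_\Omega[b(u_h(t+\tau))-b(u_h(t))]\cdot(u_h(t+\tau)-u_h(t))\,dx\,dt\le C(\tau+\sqrt h)$ with $C$ independent of $h$; finally, since $0<h\le T$ we have $\tau+\sqrt h\le\tau+\sqrt T\le\big(1+\sqrt T/\max\{\tau,\sqrt\tau\}\big)\max\{\tau,\sqrt\tau\}$, which absorbs the right-hand side into $C\max\{\tau,\sqrt\tau\}$ with $C=C(\Omega,T,f,u_0,u_b,\tau)$, as claimed.

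The step I expect to be the main obstacle is the boundary term $D(t)$: a crude estimate of it yields only a uniform constant with no smallness as $\tau\to0$, so one genuinely has to use the extra time-regularity $\partial_t u_b\in L^2(0,T;L^2)$ of Assumption~\ref{ubass}, together with a quantitative control of the deviation in time of the piecewise-constant interpolant $u_{h,b}$ from $u_b$. By contrast, the term $A(t)$ is routine once Lemma~\ref{635} is available; its only blemish is the $\sqrt h$ remainder coming from the misalignment of $t+\tau$ with the time grid, which is harmless after absorption as above.
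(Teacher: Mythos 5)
Your decomposition $u_h(t+\tau)-u_h(t)=\xi_t+\beta_t$ with $\xi_t\in H^1_0$, the use of Lemma~\ref{635} for the interior part, and the treatment of the boundary part via $\partial_t u_b\in L^2(0,T;L^2)$ all coincide with what the paper does \emph{in the regime $\tau\geq h$}. The problem is your final absorption step. You end with a bound $C(\tau+\sqrt h)$ and then write $\tau+\sqrt h\leq\bigl(1+\sqrt T/\max\{\tau,\sqrt\tau\}\bigr)\max\{\tau,\sqrt\tau\}$, which smuggles a factor $\tau^{-1/2}$ into the constant. The statement as printed does list $\tau$ among the arguments of $C$, but read that way the lemma is vacuous: Lemma~\ref{compact-x}, $(H_2)$ and Cauchy--Schwarz already give an $h$-uniform bound by a fixed constant, which is trivially $\leq C_\tau\max\{\tau,\sqrt\tau\}$. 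What the lemma is actually for --- feeding the first hypothesis of Lemma~\ref{lemma-1.9}, the Alt--Luckhaus compactness-in-time argument --- requires the right-hand side to tend to $0$ as $\tau\to 0$ \emph{uniformly in $h$}, i.e.\ a constant independent of $\tau$, and that is exactly what the paper's proof delivers (its closing line reads ``independent of $h$ and $\tau$''). Your argument does not: when $h>\tau$ the remainder $\sqrt h$ from Lemma~\ref{635} (and the $O(\sqrt h)$ interpolation error in your boundary term) is not $O(\max\{\tau,\sqrt\tau\})$.

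The missing idea is a case distinction. For $\tau\geq h$ your argument is fine, since then $\sqrt h\leq\sqrt\tau$ and every stray $\sqrt h$ is absorbed harmlessly. For $\tau<h$ one must argue differently: because $u_h$ is piecewise constant on the time grid, $u_h(t+\tau)-u_h(t)$ vanishes except on the union of the intervals $(nh-\tau,nh]$, so the space-time integral collapses to $\tau\sum_n\int_\Omega[b(u^{(n+1)})-b(u^{(n)})]\cdot[u^{(n+1)}-u^{(n)}]\,dx$; the sum is then bounded uniformly in $h$ using the discrete equation \eqref{un} tested with $u^{(n+1)}-u^{(n)}$ (split into its $H^1_0$ part and the boundary part) together with the energy bound of Lemma~\ref{compact-x}, yielding a clean $C\tau$ with $C$ independent of both $h$ and $\tau$. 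Without this second branch your proof establishes only the literally stated but contentless version of the lemma, not the one the paper needs downstream.
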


\begin{proof}  Consider first  $\tau <h$. Since $u_h(t+\tau)\equiv u_h(t)\ \forall t\in((n-1)h,nh-\tau]$, we get
\begin{align*}
&\int\limits_{0}^{T-\tau}\int\limits_{\Omega}\left[b(u_h(t+\tau))-b(u_h(t))\right]\cdot \left[u_h(t+\tau)-u_h(t)\right]\, dx\, dt  \\
&=\sum_{n=1}^{T/h-1}\int\limits_{nh-\tau}^{nh} \int\limits_{\Omega}\left[b(u_h(t+\tau))-b(u_h(t))\right]\cdot \left[u_h(t+\tau)-u_h(t)\right]\, dx\, dt\\
&=\sum_{n=1}^{T/h-1}\int\limits_{nh-\tau}^{nh} \int\limits_{\Omega}\left[b(u^{(n+1)})-b(u^{(n)})\right]\cdot \left[u^{(n+1)}-u^{(n)}\right]\, dx\, dt\\
&=\tau \sum_{n=1}^{T/h-1}\int\limits_{\Omega}\left[b(u^{(n+1)})-b(u^{(n)})\right]\cdot \left[u^{(n+1)}-u^{(n)}\right]\, dx\\
&=\tau \sum_{n=1}^{T/h-1}\int\limits_{\Omega}\left[b(u^{(n+1)})-b(u^{(n)})\right]\cdot \left[v^{(n+1)}-v^{(n)}\right]\, dx- \\
&\hspace{1cm}\tau \sum_{n=1}^{T/h-1}\int\limits_{\Omega}\left[b(u^{(n+1)})-b(u^{(n)})\right]\cdot \left[u_{b}^{(n+1)}-u_{b}^{(n)}\right]\, dx\\
&:=\tau \, (I_{1,h}+I_{2,h}) \, ,
\end{align*}
 where $v^{(n)}=u^{(n)}-u^{(n)}_b\in H^1_0(\Omega)$.
Equation \eqref{30} implies that
\[
  I_{1,h}=\sum_{n=1}^{T/h-1}h\Big[\langle f^{(n)},
v^{(n+1)}-v^{(n)}\rangle -\int\limits_{\Omega}\nabla u^{n+1}\cdot \nabla (v^{(n+1)}- v^{(n)}) \, dx\Big]  \, .
\]
From Lemma \ref{whlemma}  and  estimate \eqref{eq:energy_h} it then follows that
\[
I_{1.h}  \leq C \big( \int_0^T \|f_h\|_{H^{-1}}\|\nabla v_h\|_{L^2}\, dt + \int_0^T \|\nabla u_h\|_{L^2}\|\nabla v_h\|_{L^2}\, dt \big) \leq K_1\, ,
\]
for some constant  $K_1=K_1(\Omega, f, b_0,u_b) >0$, independent of $h$ and $\tau$.
Moreover,
 \begin{align*}
 I_{2,h} & = \frac{1}{h}\,  \int_0^{T-h} \int\limits_{\Omega}\left[b(u_h(t+h))-b(u_h(t))\right]\cdot \left[u_{h,b}(t+h)-u_{h,b}(t)\right]\, dx\, dt \\
 & \displaystyle   \leq C \, \|b(u_h)\|_{L^2(0,T;L^2)}   \|\partial_t ^h u_{h,b}\|_{L^2(0,T;L^2)} \\
 &\displaystyle    \leq C \, (1+ \|u_h\|_{L^2(0,T;L^2)} )\, \|\partial_t u_b\|_{L^2(0,T;L^2)} \leq K_2\, ,
  \end{align*}
 with some $K_2=K_2(\Omega, T, f, b_0,u_b)$ and where we have taken account hypothesis $(H_1)$ as well as   estimates \eqref{jensend} and  \eqref{eq:energy_h}.
  We thus conclude that
\[
\int\limits_{0}^{T-\tau}\int\limits_{\Omega}\left[b(u_h(t+\tau))-b(u_h(t))\right]\cdot \left[u_h(t+\tau)-u_h(t)\right]\, dx\, dt\leq  C\, \tau ,
\]
for some $K=K(\Omega, T, f, b_0,u_b,\tau)$.

Assume next  that  $\tau \geq h$ and  fix  $t\in [0, T-\tau]$.
In view of Lemma \ref{635} and estimate  \eqref{eq:energy_h},  for all $\xi(t) \in H^1_0(\Omega)$ it holds
\[
\begin{array}{l}
\displaystyle  \int\limits_{\Omega}\Big[b(u_h(t+\tau)-b(u_h(t))\Big]\cdot  \xi(t)  \, dx +
\int\limits_{t}^{t+\tau}\Big(\int\limits_{\Omega}\nabla u_h(s)\cdot\nabla\xi\, dx -
 \left\langle f_h(s),\xi(t)\right\rangle\, ds\Big)\\
\displaystyle  \qquad \qquad    \leq C \sqrt{h}\, \|\nabla \xi(t)\|_{L^2} \leq C \sqrt{\tau}\, \|\nabla \xi(t)\|_{L^2}\, ,
\end{array}
\]
where the constant $C>0$ is independent of $\tau$ and $h$.  Taking $\xi(t)= v_h(t+\tau)-v_h(t)$, where $v_h(t)=u_h(t)-u_{h,b}(t)$, and  integrating in $t$ from $0$ to $T-\tau$, we get
 \begin{align*}
 &\int\limits_{0}^{T-\tau}\int\limits_{\Omega}\left(b(u_h(t+\tau))-b(u_h(t))\right)\cdot \left(u_h(t+\tau)-u_h(t)\right)\, dx\, dt\\
  & \hspace{1cm}\leq C \sqrt{\tau\, (T-\tau)} + \int\limits_{0}^{T-\tau}\int\limits_{\Omega}\left(b(u_h(t+\tau))-b(u_h(t))\right)\cdot
  \left(u_{h,b}(t+\tau)-u_{h,b}(t)\right)\, dx\, dt\\
  & \hspace{1cm}\phantom{=}-\int\limits_0^{T-\tau}\Big(\int\limits_{t}^{t+\tau}\int\limits_{\Omega}\nabla u_h(s)\cdot\nabla\xi(t)\, dx\,ds\Big)dt
  +\int\limits_0^{T-\tau}\Big(\int\limits_{t}^{t+\tau}\left\langle f_h(s),\xi(t)\right\rangle\, ds\Big)dt\\
  & \hspace{1cm} :=  C \sqrt{\tau\, (T-\tau)}+\mathcal{I}_{1,h}+\mathcal{I}_{2,h}+\mathcal{I}_{3,h} \, ,
 \end{align*}
 where we have also used  estimate  \eqref{eq:energy_h} to bound $\|\nabla \xi\|_{L^2(0,T;L^2)}$.

The integral $\mathcal{I}_{1,h}$ can be estimated as $I_{2,h}$ in the previous step yielding
\[
      |I_{1,h}| \leq C\, \tau \, .
\]
As for  $\mathcal{I}_{2,h}$, we proceed as follows
\[\begin{array}{l}
\displaystyle |\mathcal{I}_{2,h}| \leq
\int_0^{T-\tau} \Big(\|\nabla \xi (t)\|_{L^2}\, \int_t^{t+\tau} \|\nabla u_h(s)\|_{L^2}\, ds \, \Big) dt \\   \\
\displaystyle  \leq \sqrt{\tau}\, \|\nabla u_h\|_{L^2(0,T;L^2)}\,
\int_0^{T-\tau} \|\nabla \xi (t)\|_{L^2}\, dt  \leq C\, \sqrt{\tau\, (T-\tau)}\, ,
\end{array}
\]
where we have used again  estimate  \eqref{eq:energy_h}. For the integral $\mathcal{I}_{3,h}$ we obtain similarly
$|\mathcal{I}_{3,h}| \leq    C\, \sqrt{\tau\, (T-\tau)}$,  and we thus conclude that
\[\begin{array}{l}
\displaystyle \int\limits_{0}^{T-\tau}\int\limits_{\Omega}\left(b(u_h(t+\tau))-b(u_h(t))\right)\cdot \left(u_h(t+\tau)-u_h(t)\right)\, dx\, dt
 \\ \\
 \displaystyle \leq C \big( \tau +\sqrt{\tau(T-\tau)} \big) \leq C\, \max\{\tau,\sqrt{\tau}\}\, ,
 \end{array}\]
 for some constant $C=C(\Omega, T, f, b_0,u_b)$ independent of $h$ and $\tau$.
\end{proof}

We are now in a position to prove the main existence theorem.

 \begin{theo}\label{main}
 Assume that  hypotheses $(H_1)$ and $(H_2)$ are valid and that Assumption \ref{ubass} holds. There exists a unique weak solution $u\in X$  to problem \eqref{eq:rIBVP} such that $B(u)\in L^{\infty}(0,T;L^1(\Omega;\mathbb{R}^m))$.
  \end{theo}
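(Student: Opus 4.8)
\emph{Proof plan.} The plan is to pass to the limit $h\to0$ in the semi-discrete family $\{u_h\}$ produced by Rothe's method. By Lemma~\ref{compact-x} and Remark~\ref{bd-1}, $\{u_h\}$ is bounded in $L^2(0,T;H^1(\Omega;\R^m))$ and $\sup_{t\in[0,T]}\int_\Omega B(u_h(t))\,dx$ is bounded uniformly in $h$; hypothesis $(H_2)$ then gives a uniform bound on $\{b(u_h)\}$ in $L^\infty(0,T;L^2(\Omega;\R^m))$, and Lemma~\ref{bBlemma} one in $L^\infty(0,T;L^1)$. Passing to a subsequence, $u_h\rightharpoonup u$ weakly in $L^2(0,T;H^1(\Omega;\R^m))$. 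The uniform bound on $\int_\Omega B(u_h(t))\,dx$ together with the time-translation estimate of Lemma~\ref{compact-t} are precisely the hypotheses of Lemma~\ref{lemma-1.9}, so $b(u_h)\to b(u)$ in $L^1(0,T;L^1(\Omega))$ and $B(u_h)\to B(u)$ a.e.\ in $(0,T)\times\Omega$. Passing to a further subsequence $b(u_h)\to b(u)$ a.e., and Fatou's lemma then yields $B(u),\,b(u)\in L^\infty(0,T;L^1(\Omega))$ and, by $(H_2)$, even $b(u)\in L^\infty(0,T;L^2(\Omega;\R^m))$.

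Next I would pass to the limit in the three requirements of Definition~\ref{weaksol}. For \textbf{(b)}, test the identity of Lemma~\ref{sol-app-bvp} against $\xi$ compactly supported in $(0,T)\times\Omega$, move the backward difference quotient off $b(u_h)$ as in Lemma~\ref{sol-app-bvp-1} (the boundary-in-time terms vanishing for $h$ small), and use $b(u_h)\to b(u)$ in $L^1$, $\nabla u_h\rightharpoonup\nabla u$ in $L^2$ and $f_h\to f$ in $L^2(0,T;H^{-1})$ to obtain $\partial_t b(u)-\Delta u=f$ in $\mathcal D'((0,T)\times\Omega)$; since $u\in L^2(0,T;H^1)$ and $f\in L^2(0,T;H^{-1})$ this identifies $\partial_t b(u)=f+\Delta u\in L^2(0,T;H^{-1}(\Omega;\R^m))$, so $u\in X$, and \textbf{(b)} extends to all admissible $\xi$ by density. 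For \textbf{(a)}, the trace map $L^2(0,T;H^1(\Omega))\to L^2(0,T;H^{1/2}(\Gamma))$ is bounded, hence weakly continuous, while $u_h|_\Gamma=u_{h,b}\to u_b$ in $L^2(0,T;H^{1/2}(\Gamma))$, so $u|_\Gamma=u_b$. For \textbf{(c)}: as \textbf{(b)} now holds with $\partial_t b(u)\in L^2(0,T;H^{-1})$ and $b(u)\in C([0,T];H^{-1})$, an integration by parts in time reduces \textbf{(c)} to the single identity $b(u)(0)=b_0$; the latter follows by passing to the limit in Lemma~\ref{sol-app-bvp-1} --- the term $\tfrac1h\int_{T-h}^T\int_\Omega b(u_h)\cdot\xi$ tends to $0$ because $\xi(T)=0$ and $b(u_h)$ is bounded in $L^\infty(0,T;L^1)$, while $\tfrac1h\int_0^h\int_\Omega b_0\cdot\xi\to\int_\Omega b_0\cdot\xi(0)$ --- or directly from the bound $\|b(u_h(0^+))-b_0\|_{H^{-1}}=O(\sqrt h)$ read off from \eqref{u1} together with the energy estimate.

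For uniqueness, let $u_1,u_2\in X$ be weak solutions with $B(u_i)\in L^\infty(0,T;L^1)$; then, by $(H_2)$, $b(u_i)\in L^\infty(0,T;L^2)\cap C([0,T];H^{-1})$ and \textbf{(c)} gives $b(u_i)(0)=b_0$. Set $w=u_1-u_2\in L^2(0,T;H^1_0(\Omega;\R^m))$ and $W=b(u_1)-b(u_2)$, so $W(0)=0$ and $\partial_t W\in L^2(0,T;H^{-1})$. Subtracting the identities \textbf{(b)} for $u_1$ and $u_2$ and inserting, for fixed $s\in(0,T)$, the test function $\xi(t,\cdot)=\int_t^s w(\tau,\cdot)\,d\tau$ for $t<s$, $\xi\equiv0$ for $t\ge s$ --- which belongs to $L^2(0,T;H^1_0)$, is continuous in time with $\xi(s)=0$, and satisfies $\partial_t\xi=-w$ on $(0,s)$ --- an integration by parts in time in the Gelfand triple $H^1_0\subset L^2\subset H^{-1}$ (legitimate since $W(0)=0=\xi(s)$), combined with $\nabla w=-\partial_t(\nabla\xi)$ on $(0,s)$, collapses the two terms to
\[
\int_0^s\!\!\int_\Omega\big(b(u_1)-b(u_2)\big)\cdot(u_1-u_2)\,dx\,dt+\tfrac12\,\|\nabla\xi(0)\|_{L^2(\Omega)}^2=0\,.
\]
The first summand is non-negative because $b=\nabla\phi$ is monotone and the second always is, so both vanish; from $\|\nabla\xi(0)\|_{L^2}=0$, $\xi(0)\in H^1_0$ and Poincar\'e's inequality we get $\int_0^s w(\tau,\cdot)\,d\tau=0$ for every $s\in(0,T)$, whence $w=u_1-u_2=0$ a.e. (In particular, uniqueness forces convergence of the whole family $\{u_h\}$.)

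The step I expect to be the main obstacle is the rigorous justification, in this degenerate framework, of the passage to the limit in the nonlinear term and of the time integrations by parts just invoked, where $b(u_h)$ (resp.\ $b(u_i)$) is controlled only in $L^\infty(0,T;L^1)$ and $L^\infty(0,T;L^2)$ rather than in $L^2(0,T;H^1_0)$: one must pair $\partial_t b(u)$ against test functions whose time derivatives live merely in $L^1(0,T;L^\infty)$ or in $L^2(0,T;H^1_0)$, and give rigorous meaning to the endpoint traces $b(u)(0)$, $b(u)(T)$ and to the attainment of the initial datum --- all of which is carried out exactly as in \cite{AL83}. The other potentially delicate ingredient, the strong convergence $b(u_h)\to b(u)$, has already been secured by Lemma~\ref{lemma-1.9} via the time-compactness estimate of Lemma~\ref{compact-t}.
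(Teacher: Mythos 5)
Your proposal is correct and follows essentially the same route as the paper: Rothe's approximation, the uniform energy bound of Lemma~\ref{compact-x}, the time-translation estimate of Lemma~\ref{compact-t} feeding into Lemma~\ref{lemma-1.9} to obtain strong $L^1$ convergence of $b(u_h)$, and the passage to the limit in the discrete integration-by-parts identity to recover conditions \textbf{a)}--\textbf{c)} and the initial datum. The only (harmless) differences are that you identify $\partial_t b(u)=f+\Delta u$ directly in $\mathcal{D}'$ rather than first extracting a weak limit $\lambda$ of $\partial^h_t b(u_h)$ in $L^2(0,T;H^{-1})$ as the paper does, and that you write out in full the Alt--Luckhaus duality argument for uniqueness (test function $\xi(t)=\int_t^s w$), which the paper only cites from \cite{AL83}.
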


\begin{proof} Let us start with existence: for any  $h>0$, let ${u}_h \in L^{2}(0,T; H^{1}(\Omega))$ be the piecewise constant function
 defined in \eqref{eq:def_interpolation} and which,  in view of
Lemmas \ref{sol-app-bvp} and \ref{sol-app-bvp-1},
 satisfies
\begin{itemize}

\item[i)]  $u_h|_{(0,T)\times \Gamma}= u_{h,b}$;

\item[ii)] for  all  $\xi \in L^2\big(0,T; H_{0}^{1}(\Omega; \R^m)\big)$
\begin{equation}\label{feliz}
 \int_{0}^{T} \int_{\Omega} \partial^{h}_{t}  b(u_h(t))\cdot \xi(t) \, dx \, dt +   \int_{0}^{T}\int_{\Omega} \nabla u_{h}(t) \cdot \nabla \xi(t) \, dx\, dt=
 \int_{0}^{T} \langle f_h(t),
 \xi(t) \rangle \, dt;
\end{equation}

\item[iii)] for all $\xi\in L^2\big(0,T; H_{0}^{1}(\Omega; \R^m)\big) \cap W^{1,1}\big(0,T; L^{\infty}(\Omega; \R^m)\big)$, with $\xi(T)=0$,
\begin{equation}\begin{array}{l}\label{31}
\displaystyle   \int_{0}^{T}\int_{\Omega} \partial^{h}_{t}  b(u_h(t))\cdot \xi(t) \, dx\, dt
  = - \int_{0}^{T-h}  \int_{\Omega}  b(u_h(t))\partial^{h}_{t}  \xi(t+h) \, dx\, dt
\\ \\
\displaystyle   \qquad  +\  \frac{1}{h} \left[\int_{T-h}^{T}\int_{\Omega}  b(u_h(t))\cdot \xi(t) \, dx\, dt -
   \int_{-h}^{0}\int_{\Omega}  b_{0}\cdot \xi(t+h) \, dx\, dt\right]\, .
   \end{array}
   \end{equation}

\end{itemize}
Consider a sequence $\{u_h\}$ of such functions and note  that Lemma \ref{compact-x} (see also Remark \ref{bd-1}) guarantees the
existence of  a function ${u} \in L^{2}(0,T; H^{1}(\Omega;\mathbb{R}^m))$ such that
(up to subsequence)
\begin{equation}\label{poli}{u}_h \debaixodasetafraca {}{h\to 0} u\quad \text{in}\quad L^2(0,T; H^{1}(\Omega;\R^m)).\end{equation}

For $u$ to be a weak solution to problem \eqref{eq:rIBVP}, it needs to meet the  conditions {\bf a), b)} and {\bf c)} of Definition \ref{weaksol} and be such that
\[
b(u)\in L^\infty(0,T;L^1(\Omega;\mathbb{R}^m)) \quad \text{and} \quad \partial_t b(u)\in L^2(0,T;H^{-1}(\Omega;\mathbb{R}^m)\, .
\]
Now, $u_{h,b}$ is the (piecewise constant in time) Cl\'ement  interpolant of $u_b$ and $u_h$ satisfies \eqref{poli} so
 \[
 {u}_{h,b}-u_h \debaixodasetafraca  {}{h\to 0} u_b-u\quad \text{in}\quad L^2(0,T;H^{1}(\Omega,\R^m)) \, .
 \]
Hence, $u_b-u\in L^2(0,T;H^{1}_{0}(\Omega,\R^m))$ and condition {\bf a} holds.

To show that  {\bf b} holds, we first observe that $f_h$ being the Cl\'ement  interpolant of $f$ and \eqref{poli}  guarantee that
%
\begin{equation*} \int_{0}^{T}   \langle f_h, \xi \rangle \, dt \debaixodaseta {}{h\to 0} \int_{0}^{T}   \langle f, \xi \rangle \, dt\, , \qquad
 \int_{0}^{T}     \nabla u_h\cdot \xi   \, dt \debaixodaseta {}{h\to 0} \int_{0}^{T}     \nabla u\cdot \xi  \, dt
 \end{equation*}
for all $ \xi\in L^2(0,T; H_{0}^{1}(\Omega; \R^m))$.
 Moreover,  from \eqref{feliz},  we obtain
\[\begin{array}{l}
\displaystyle \Big| \int_0^T  \left<\partial^{h}_t b(u_h),\xi\right>\, dt \Big| = \Big| \int_{0}^{T} \int_{\Omega} \partial^{h}_{t}  b(u_h(t))\cdot \xi(t) \, dx \, dt \Big | \\ \\
\displaystyle \leq
(\|\nabla u_h\|_{L^2(0,T;L^2)}+\|f_h\|_{L^2(0,T,H^{-1})})\|\xi\|_{L^2(0,T;H^1_0)}\, ,
\end{array}
\]
 for all  $ \xi\in L^2(0,T; H_{0}^{1}(\Omega; \R^m))$.
Using Lemmas \ref{compact-x} and \ref{whlemma}, we thus conclude that
 \begin{equation}
\|\partial^{h}_{t}  b(u_h)||_{L^2(0,T;H^{-1})}\leq  C\, ,
\label{bqest}
\end{equation}
for some constant $C>0$ depending on $f,u_b,b_0$ and $\Omega$ but independent of $h$.

Estimate \eqref{bqest} guarantees the existence of $\lambda\in L^2(0,T;H^{-1}(\Omega; \R^m))$ such that
\[
\partial^{h}_{t}  b(u_h)\debaixodasetafraca {}{} \lambda
\qquad \text{in }L^2(0,T;H^{-1}(\Omega; \R^m)).
\]
To  conclude {\bf b)}, it remains to be checked that
$\lambda= \partial_{t}b(u)$ in $L^2(0,T;H^{-1}(\Omega; \R^m))$.
 To this end, it is enough to show that we can pass to the limit in \eqref{31}.
 The left-hand side immediately yields
 \[
  \int_{0}^{T}\int_{\Omega} \partial^{h}_{t}  b(u_h(t))\cdot \xi(t) \, dx\, dt =   \int_{0}^{T} \left<\partial^{h}_{t}  b(u_h),\xi\right>
\debaixodaseta {}{h\to 0}   \int_{0}^{T} \left<\lambda,\xi\right>
 \]
For the first term on the right-hand side, the estimate $\|u_h\|_{L^2(0,T;H^1)}\leq C$ from Remark~\ref{bd-1} allows to apply Lemma~\ref{lemma-1.9} and conclude that $b(u_h)\to b(u)$ strongly in $L^1(0,T;L^1(\Omega;\mathbb{R}^m))$ and, in particular, $b(u_h)\to b(u)$ pointwise a.e. in $(0,T)\times\Omega$.
Since, obviously $\partial_t^h \xi(t+h,x)=\frac{\xi(t+h,x)-\xi(t,x)}{h}\to \partial _t\xi(t,x)$ a.e. in $(0,T)\times\Omega$, and by Lemma~\ref{ebtd} $\|\partial_t^h \xi\|_{L^1(0,T;L^{\infty})}\leq C$, we have
\[
 \int_{0}^{T-h}  \int_{\Omega}  b(u_h(t))\partial^{h}_{t}  \xi(t+h) \, dx\, dt\to \int_{0}^{T}  \int_{\Omega}  b(u(t))\partial_{t}  \xi(t+h) \, dx\, dt \, .
\]
For the second term on the right-hand side  in \eqref{31}, the regularity $\xi\in W^{1,1}(0,T;L^{\infty}(\Omega;\mathbb{R}^m)$ guarantees that
\[
\frac{1}{h}\int_{T-h}^{T} \|\xi(t)\|_{L^\infty(\Omega)} dt \to \|\xi(T)\|_{L^\infty(\Omega)}=0\, ,
\]
thus
$$
 \left|\frac{1}{h}\int_{T-h}^{T}\int_{\Omega}  b(u_h(t))\cdot \xi(t) \, dx\, dt\right|
 \leq \|b(u_h)\|_{L^\infty(0,T;L^1)}\frac{1}{h}\int_{T-h}^{T} \|\xi(t)\|_{L^\infty(\Omega)} dt\to 0.
$$
Moreover, $\frac{1}{h}\int_{0}^{h} \xi(t)\,dt\to \xi(0)$ strongly in $L^\infty(\Omega)$, and thus
$$
 \frac{1}{h}\int_{0}^{h}\int_{\Omega}  b_{0}\cdot \xi(t) \, dx\, dt = \int_{\Omega} b_0\left(\frac{1}{h}\int_{0}^{h} \xi(t)\,dt\right)\,dx \to \int_\Omega b_0 \xi(0)\,dx.
$$
Therefore,  passing to the limit in \eqref{31}, we conclude that
\begin{equation*}
\label{feliz2016}
\int_{0}^{T} \langle\lambda, \xi(t)\rangle\, dt= -\int_{0}^{T}\int_{\Omega} b(u(t)) \partial_t \xi(t)\, dx\, dt - \int_{\Omega}
 b_0\cdot\xi(0) \, dx,
 \end{equation*}
 for all $\xi\in L^2\Big(0,T; H_{0}^{1}(\Omega; \R^m)\Big) \cap W^{1,1}\Big(0,T; L^{\infty}(\Omega; \R^m)\Big)$ with $\xi(T)=0$, which means of course that $\lambda =\partial_t b(u)\in  L^2(0,T;H^{-1}(\Omega;\mathbb{R}^m)$ and the initial condition $b(u(0))=b_0$ are satisfied in the weak sense. Hence, conditions {\bf b} and  {\bf c} hold.

As for the uniqueness, the proof of \cite[Theorem 2.4]{AL83} applies directly since we are considering linear diffusion in \eqref{eq-IBVP}; note that hypothesis $(H_1)$ guarantees that $b(u)\in L^2(0,T;L^2(\Omega;\mathbb{R}^m))$.

Finally, we show that $B(u)\in L^\infty(0,T;L^1(0,T;\mathbb{R}^m))$ from which, using hypotheses $(H_2)$, it follows that $b(u)\in L^\infty(0,T;L^2(\Omega;\mathbb{R}^m))$ so that $u\in X$.
\end{proof}


\section{Positive properties of the solutions}
 \label{section:positivity}

Under an additional structural assumption on the contaminant transport potential $\phi$, we retrieve nonnegative solutions provided the  source data is (componentwise) nonnegative.
More precisely, we can state the following result.
\begin{theo}
Let the assumptions of Theorem \ref{main} hold and  assume, moreover, that $\phi(u)=\Phi(|u|)$ for some convex and nondecreasing $C^1$ function $\Phi:\R^+\to \R^+$ such that $\Phi(0)=0=\min_{w\in\mathbb{R}^+}\Phi(w)$.
If the functions $\overline u$ and $g$ in Lemma \ref{approx-min-problem} are  componentwise non-negative and $g\in L^2(\Omega;\mathbb{R}^m)$, then the minimizer $u$ is (componentwise) nonnegative.
\end{theo}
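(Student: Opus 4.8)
The plan is to argue by a \emph{positive-part comparison}, exploiting that the variational problem in Lemma~\ref{approx-min-problem} admits a \emph{unique} minimizer (cf.\ the remark following it). Equivalently to minimizing the functional $E$ from the proof of Lemma~\ref{approx-min-problem} over $v\in H^1_0(\Omega;\R^m)$, consider
\[
\mathcal E(v)=\frac1h\int_\Omega\big(\phi(v)-b(\overline{u})\cdot v\big)\,dx+\frac12\int_\Omega|\nabla v|^2\,dx-\int_\Omega g\cdot v\,dx
\]
minimized over $\{v\in H^1(\Omega;\R^m):v|_\Gamma=w\}$; note that $g\in L^2$ by hypothesis and $b(\overline{u})\in L^2$ by $(H_1)$, so all terms are finite. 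Let $u$ be the minimizer and let $u^+$ be its componentwise positive part, $(u^+)_i=\max\{u_i,0\}$. Provided the boundary datum $w$ is itself componentwise non-negative — which is the relevant case, since in the Rothe scheme $w=u_b^{(n)}$ is a time-average of $u_b$, so $u_b\ge0$ yields $w\ge0$ — the truncation $u^+$ lies in $H^1(\Omega;\R^m)$ with trace $u^+|_\Gamma=w^+=w$, hence is an admissible competitor.

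The core of the argument is to show $\mathcal E(u^+)\le\mathcal E(u)$ term by term, using only the pointwise facts $0\le|u^+|\le|u|$ and $u^+\ge u$ (componentwise) a.e. First, since $\phi=\Phi\circ|\cdot|$ with $\Phi$ nondecreasing, $\phi(u^+)=\Phi(|u^+|)\le\Phi(|u|)=\phi(u)$ pointwise, so the potential term does not increase. Second, $b=\nabla\phi$ satisfies $b(z)=\Phi'(|z|)\,z/|z|$ for $z\ne0$ with $\Phi'\ge0$ (as $\Phi$ is $C^1$ and nondecreasing), and $b(0)=0$; hence $b$ preserves the componentwise sign, $\overline{u}\ge0$ forces $b(\overline{u})\ge0$ componentwise, and together with $u^+-u\ge0$ componentwise this gives $-\int b(\overline{u})\cdot u^+\le-\int b(\overline{u})\cdot u$. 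Third, $\int|\nabla u^+|^2\le\int|\nabla u|^2$ by the standard truncation identity $\nabla(u_i^+)=\nabla u_i\,\mathbf{1}_{\{u_i>0\}}$. Fourth, $g\ge0$ and $u^+\ge u$ componentwise yield $-\int g\cdot u^+\le-\int g\cdot u$. Summing the four inequalities gives $\mathcal E(u^+)\le\mathcal E(u)$.

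Since $u$ minimizes $\mathcal E$, this forces $\mathcal E(u^+)=\mathcal E(u)$, so $u^+$ is also a minimizer; by uniqueness of the minimizer we conclude $u^+=u$, that is, $u\ge0$ componentwise. I expect the only delicate bookkeeping to be that the argument genuinely needs $w$ — and not merely $\overline{u}$ and $g$ — to be componentwise non-negative in order for $u^+$ to be admissible; accordingly this should be stated explicitly (or inherited from $u_b\ge0$). Everything else is routine once one observes that the structural hypothesis $\phi=\Phi\circ|\cdot|$ with $\Phi$ nondecreasing is exactly what makes both the potential term $\phi(v)$ and the coupling term $b(\overline{u})\cdot v$ behave monotonically under the truncation $v\mapsto v^+$, so that the remaining (Dirichlet and linear) terms only contribute favourably.
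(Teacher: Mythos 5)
Your proof is correct and follows essentially the same route as the paper: compare the minimizer $u$ with its componentwise positive part $u^+$, showing term by term that the energy does not increase (using $\Phi$ nondecreasing for the potential, $b(\overline u)\ge 0$ together with $u^+\ge u$ for the coupling and source terms, and Stampacchia's identity $\nabla u_i^+=\chi_{\{u_i>0\}}\nabla u_i$ for the Dirichlet term), then invoke uniqueness of the minimizer to conclude $u^+=u$. Your explicit observation that the boundary datum must itself be componentwise nonnegative for $u^+$ to be an admissible competitor is a point the paper's proof leaves implicit, and is a sensible refinement rather than a deviation.
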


\begin{rmk} We can no longer allow $g$ to be an $H^{-1}(\Omega;\mathbb{R}^m)$ distribution. Physically,
 there cannot be any sinks, i.e. the sources must be non-negative functions $f_i=f_i(t,x)\geq 0$ for all $t\geq 0$.

Note also that the structural assumption on the potential $\phi$ can be weakened to $\phi(u^+)-b(\overline u)\cdot u^+\leq \phi(u)-b(\overline{u})\cdot u$, for fixed $\overline{u}\in \R^m_+$ and for all $u\in \R^m$, which is a kind of additional convexity/monotonicity property.
\end{rmk}
\begin{proof}
It is enough to prove that, if $\overline u\geq 0$, then any minimizer $u$ from Lemma~\ref{approx-min-problem} remains (componentwise) non-negative. Indeed, iterating will give positivity $u^{(n)}\geq 0$ at the discrete level for all $n\geq 0$, thus the piecewise-constant interpolation $u_h(t)\geq 0$ and the final solution $u=\lim\limits_{h\to 0}u_h\geq 0$ as well.

Our extra assumption on the potential readily implies that
$$
\phi(u^+)=\Phi(|u^+|)\leq\Phi(|u|)=\phi(u)
$$
for all $u\in \R^m$, where $u^+$ denotes the componentwise positive part $(u^+)_i=\max(u_i,0)$ (and of course $|u^+|\leq |u|$).
Moreover for all $\overline{u}\geq 0$ we see that $b(\overline{u})=\Phi'(|\overline{u}|)\frac{\overline{u}}{|\overline{u}|}\geq 0$ has nonnegative components as soon as $\overline{u}\geq 0$.
As a consequence, it holds
$$
b(\overline{u})\cdot u \leq b(\overline u)\cdot u^+.
$$
Then using $u^+$ in the minimization problem, we get
\begin{align*}
E(u^+)	&	= \int\limits_{\Omega}\frac{\phi(u^+)-b(\overline{u})\cdot u^+}{h} \, dx +\frac{1}{2}\int\limits_{\Omega}|\nabla u^+|^2 \, dx -\int\limits_{\Omega} g\cdot u^{+} dx\\
  & \leq \int\limits_{\Omega}\frac{\phi(u)-b(\overline{u})\cdot u}{h}+\frac{1}{2}\int\limits_{\Omega}|\nabla u|^2\, dx  -\int\limits_{\Omega} g\cdot u \, dx   =E(u),
\end{align*}
where we have used Stampacchia's result $\nabla u_i^+=\chi_{[u_i>0]}\nabla u_i$ and thus $\|\nabla u^+_i\|_{L^2}\leq \|\nabla u_i\|_{L^2}$.
Uniqueness of the minimizer  implies  that $u^+=u$, from which it follows that $u(x)\geq 0$ componentwise and a.e. $x\in \Omega$.
\end{proof}

%

\section{Numerical results}
\label{Numerics}
In this section, we will complement the previous existence results by implementing a simple finite-difference algorithm to approximate the solution $u$ 
of problem \eqref{eq:rIBVP}. For the ease of exposition we restrict ourselves to the one-dimensional case ($d=1$) with two species ($m=2$).

Let us choose a fixed space interval $[a,b]$ and, for fixed $I\in \mathbb N$, we take a uniform partition
$$
\Delta x =\frac{b-a}{I+1},\qquad x_i=a+i\Delta x,\, \quad i=0\ldots I+1 \, ,$$
so that $x_{I+1}=b$. Similarly,  for  a fixed $T<\infty$ and $N\in \mathbb N$, we set
$$
\Delta t = \frac{T}{N},\qquad t^n =n\Delta T,\, \quad n=0\ldots N.
$$
The unknowns take values at the meshpoints $(x_i,t^n)$ and, as usual, we write
$$
U^n_i\approx u(x_i,t^n)\, , \quad i=1,\ldots, I\, , \ \ n=0,\ldots, N\, .
$$
For simplicity, let us consider homogeneous Dirichlet boundary conditions
$$
u(a,t)=u(b,t)=0\qquad\Leftrightarrow \qquad U^{n}_{0} = U^{n}_{I+1}=0 \ \ \forall n\, ,
$$
and assume that
$ f=0$.
We construct a global approximation,  continuous in $x$  and piecewise linear at each  interval $K_i=(x_i,x_{i+1})$, to approximate each of the terms in 
the discrete system of equations. To this end, we first choose the elementary quadrature method
$$
\int_{x_i}^{x_{i+1}}\phi\left(u(x,t^{n+1})\right)\,dx\approx \frac{\phi(U^{n+1}_i)+\phi(U^{n+1}_{i+1})}{2}\Delta x.
$$
Defining $b^{n}_i=b\left( u^{n}_i\right)$, we obtain similarly the approximation
$$
\int_{x_i}^{x_{i+1}}b\left(u(x,t^{n})\right)\cdot u(x,t^{n+1})\,dx\approx \frac{b^{n}_i\cdot U^{n+1}_i + b^{n}_{i+1}\cdot U^{n+1}_{i+1}}{2}\Delta x\, .
$$
Moreover, the Dirichlet energy is approximated as
$$
\frac{1}{2}\int_{x_i}^{x_{i+1}}|\nabla u(x,t^{n+1})|^2\,dx\approx \frac{1}{2}\left|\frac{ U_{i+1}^{n+1}- U_{i}^{n+1}}{\Delta x}\right|^2\Delta x.
$$
Following the approach presented in the previous sections, and after initialization
$$
 U^0_i= u^0(x_i)\, ,
$$
the approximate solution is determined by solving recursively
\begin{equation}
U^{n+1}=\operatorname{Argmin} F^n,
\label{discrete_approx-min-problem}
\end{equation}
where the real-valued function $F^n:(\R^m)^I\to \R$ is the spatial approximation of
$$
\mathcal{F}^n(u)=\int\limits_{\Omega}\big(\phi(u(x))- b(u^n(x))\cdot u(x)\big)\, dx \,+ \,\frac{\Delta t}{2}\int\limits_{\Omega} | \nabla u(x)|^2\, dx\, ,
$$
and, according to the above-defined quadratures, is explicitly written as
\begin{equation}
F^n( U):=\sum\limits_{i=1}^I\phi( U_i)\Delta x-\sum\limits_{i=1}^I b^n_i\cdot  U_i\, \Delta x +\frac{\Delta t}{2}\sum\limits_{i=0}^I\left|\frac{U_{i+1}- U_i}{\Delta x}\right|^2\Delta x\, .
\label{eq:discrete_energy_functional}
\end{equation}
Note that both the left and right cells, $[x_{i-1},x_i]$ and $[x_{i},x_{i+1}]$, contribute with weight $1/2$ to the first two terms for each $i$. 
Moreover, we have exploited the zero boundary conditions $U_0=U_{I+1}=0$ in the last term
 and observed that  no contributions $\phi(0)=0$ and $b(0)=0$ arise from the first two terms for $i=0,I+1$.
This construction is nothing but the spatial discretization of the function $u^{(n)}$ defined \eqref{eq:def_interpolation} using the minimization scheme of Lemma~\ref{approx-min-problem}.

 All the simulations were performed using \texttt{Gnu/Octave} on a personal computer. The numerical minimization step can be performed using any unconstrained optimization method. We have  used the \texttt{fminunc} toolbox with gradient option, since the Jacobian $D_UF^n$ can easily be 
 computed analytically from the above formula.

\begin{rmk}
 We point out that our scheme could be adapted to arbitrary space dimension using, {\sl e.g.}, finite elements.
 One could also choose higher order quadratures, but the naive trapezoidal quadrature method used here has the advantage of allowing  the Jacobians 
 to be computed analytically.
 \end{rmk}

\subsection{A two-component test-case}
As an example we present here a two-component computation $u=(u_1,u_2)$ for the Freundlich isotherm $b(u)=u+|u|^{p-1}u$, with exponent $p=1/3$ 
(for instance). The computation was performed for $\Delta t=\Delta x=10^{-2}$ in a domain $(x,t)\in (-2,2)\times(0,0.5) $ (but other intervals could have been used) and the result is shown in Fig.~\ref{fig:cauchy}.
\begin{figure}[h!]
 \begin{center}
 \def\svgwidth{1.\textwidth}
\centering
  \includegraphics[width=1.\textwidth]{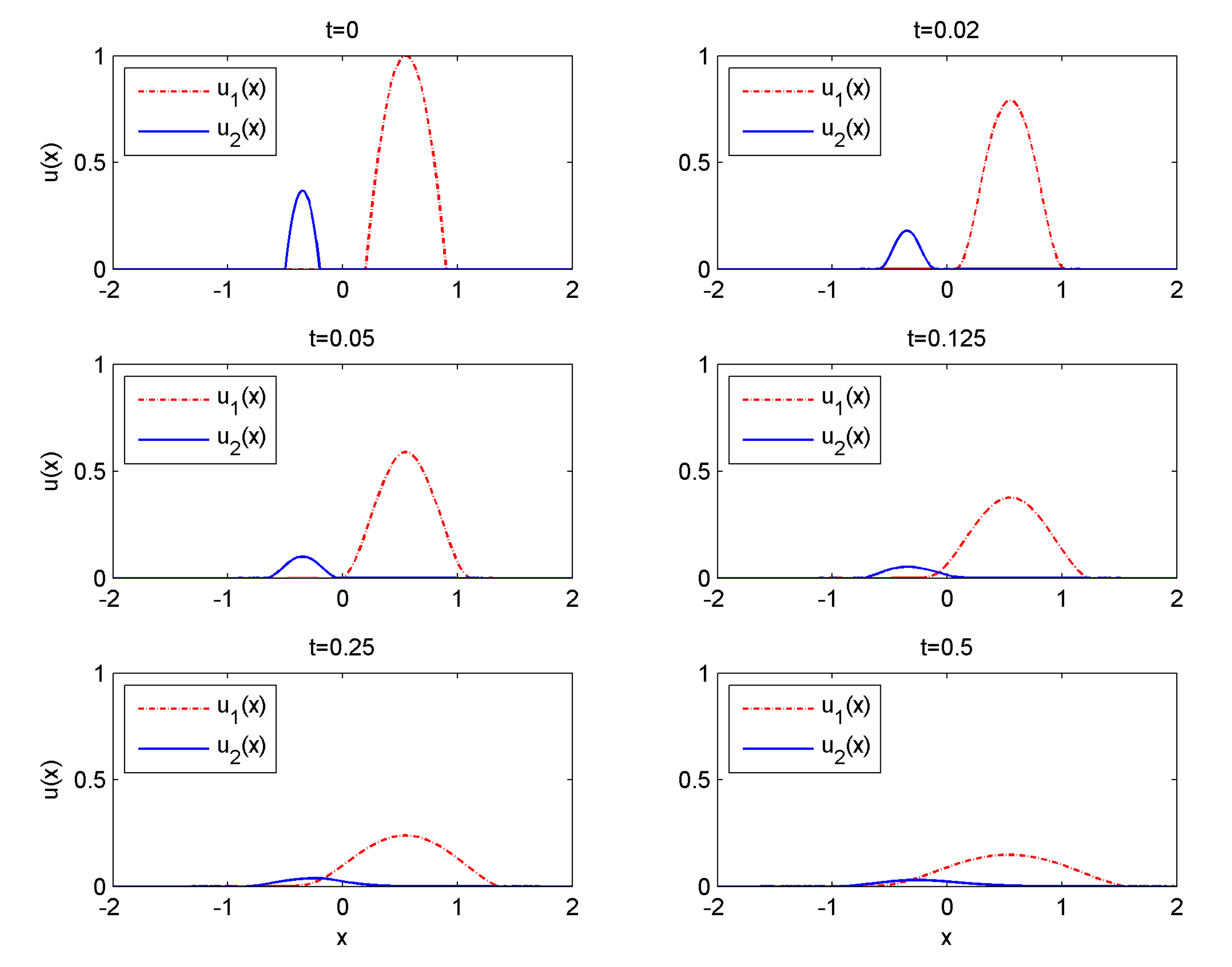}
 \end{center}
  \caption{snapshot of $u_1(t,.)$ (dashed line) and $u_2(t,.)$ (solid line) as functions of $x$ for several times from $t=0$ to $t=0.5$.}
 \label{fig:cauchy}
 \end{figure}

It is worth pointing out that our scheme seems to be automatically positive, as should be expected from the results of Section~\ref{section:positivity} since the particular isotherm $b(u)=u+|u|^{1/3-1}u$  maps the positive cone $(\R^+)^2$ into itself. In the same spirit and as already discussed, one should expect that the 
singularity/degeneracy $D_u b(0)\approx\infty$ leads to finite speed of propagation and free boundary solutions. The scheme seems to capture reasonably well the propagation of free boundaries, as shown  in Fig.~\ref{fig:cauchy}; the left and right free-boundaries stay away from the boundaries $x=\pm 2$ for small times, and the internal hole at time $t=0$ persists until $t\approx .05$ (after which the supports of $u_1(t,.)$ and $u_2(t,.)$ match for all later times and progressively invade the whole domain with finite speed, which is a characteristic feature of degenerate diffusion).

\subsection{Error analysis}
In Section \ref{Existence}, we proved convergence of the semi-discretized solution $u_h$  to the unique weak solution $u$ as the time-step $h\to 0$, but without error estimates.
Therefore, any theoretical convergence result $U\to u$  with a priori error estimates as $\Delta x,\Delta t\to 0$ for
the fully discretized problem is certainly beyond the scope of this paper.
However, we wish to present instead a numerical investigation of the convergence orders.

To compare the numerical solution we need an analytical one. To the best of our knowledge this is not possible in the two-component case.
In the scalar case (single component) and for the specific isotherm $b(u)=|u|^{p-1}u$ with Freundlich exponent $p\in (0,1)$, the change of variables $z=b(u)$ turns the homogeneous equation $\partial_t b(u)=\Delta u$ into the celebrated Porous Medium Equation (PME, in short)
$$
\partial_t z =\Delta( |z|^{m-1}z)\hspace{1cm}\mbox{with }m=\frac{1}{p}>1.
$$
As is well known \cite{Va07}, the Cauchy problem  for PME is well posed in the whole space for suitable initial data, and the problem exhibits finite speed of propagation, i.e. for any compactly supported initial data $z^0(x)$ the solution $z(t,.)$ stays compactly supported for all times, and the supports expand with finite speed.
More importantly, the Zel'dovich-Kompaneets-Barenblatt profiles
$$
(t,x)\mapsto \mathcal{Z}(C;t_0;x_0;t,x):=(t+t_0)^{-\alpha}\left(C-k|x-x_0|^2(t+t_0)^{-2\beta}\right)_+^{\frac{1}{m-1}}\geq 0
$$
yield a family of explicit solutions for $t>-t_0$ and $x\in \R^d$. Here $C>0$ is a free parameter corresponding to the $L^1(\R^d)$ mass (which is preserved along evolution), $t_0\in \R$ and $x_0\in \R^d$ translate the invariance under shifts, and
$$
\alpha=\frac{d}{d(m-1)+2},\qquad \beta=\frac{\alpha}{d},\qquad  k=\frac{\alpha(m-1)}{2md}
$$
are universal constants depending on the exponent $m>1$ and the space dimension $d\geq 1$ only. Moving back to $u=|z|^{m-1}z$ yields explicit solutions $(t,x)\mapsto \mathcal{U}(C;t_0;x_0;t,x)$ to the original equation, which we use to compute errors.
The results from the previous sections, and in particular the \emph{a priori} energy estimates, suggest that the implicit scheme given by \eqref{discrete_approx-min-problem}-\eqref{eq:discrete_energy_functional} should be stable unconditionally with respect to $\Delta t$, the stability occurring in the relevant energy spaces corresponding to Theorem~\ref{main}.
For our tests we always chose a fixed computation time $T=.5$, and fix $\Delta t=2 \Delta x$ for convenience. The other parameters are adjusted so that the ZKB solutions stay supported inside the numerical domain $\Omega=(a,b)$, so that these free-boundary solutions correspond to the unique solution to the Cauchy problem in $\Omega$ with zero boundary conditions at least for $t\leq T$.

Note that the larger the diffusion exponent $m>1$, the more degenerate the equation. The speed of propagation becomes infinite in the limit $\frac{1}{p}=m\downarrow 1$ since the PME $\partial_t z=\Delta z^m$ then formally converges to the heat equation, which has infinite speed of propagation as all uniformly parabolic equations.
This is also captured by our scheme, but due to the lack of space we do not present the corresponding simulations here.
The $L^2(Q_T)$ errors are shown in Fig.~\ref{fig:error} as a function of  $\Delta x\to 0$ for various values of $m>1$, suggesting algebraic convergence: 
the convergence seems to be affected only through a prefactor $C_m$ in $e_2\lesssim C_m|\Delta x|^r$, but the rate $r$ appears to be independent of $m$.
\begin{figure}
\begin{center}
 \def\svgwidth{1.\textwidth}
\centering
  \includegraphics[width=\textwidth]{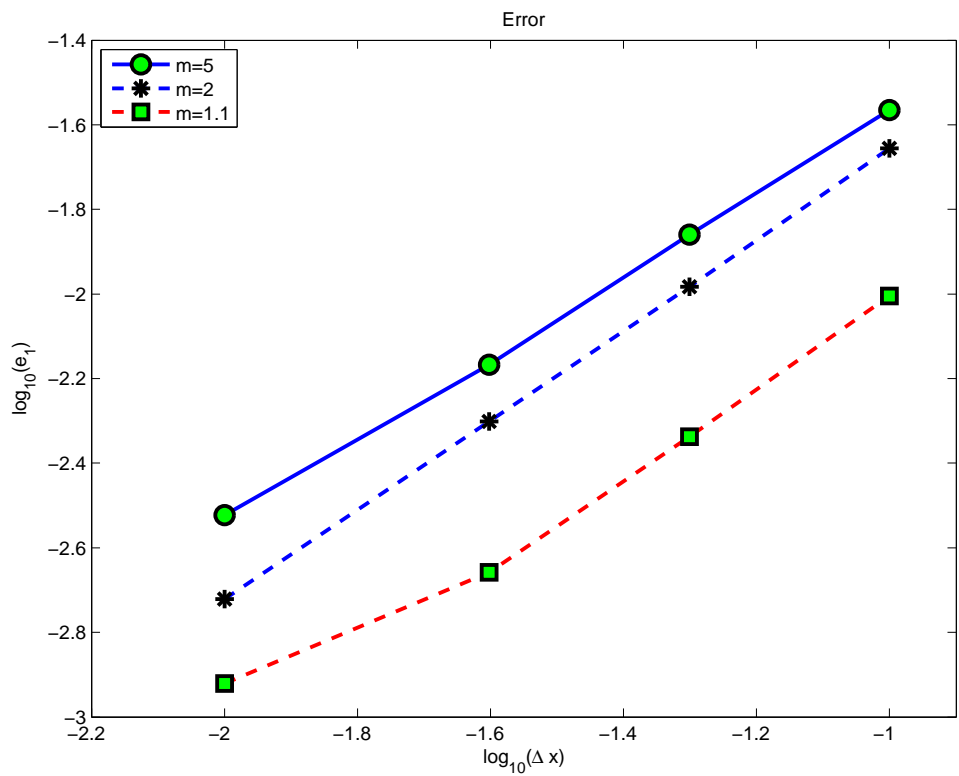}
 \end{center}
  \caption{Logarithmic plot of the $L^2(Q_T)$ errors $e_2$ as a function of $\Delta x$, with $\Delta t=2\Delta x$.}
 \label{fig:error}
 \end{figure}

\subsection*{Acknowledgments}
This work was partially supported by FCT/Portugal through the project UID/MAT/04459/2013 and by the UT Austin|Portugal CoLab project.
FB and LM were supported by the Portuguese National Science Foundation through through FCT
fellowships SFRH/BPD/ 33962/2009 and SFRH/BPD/88207/2012.

\end{document}